\newcommand{\crosses}[1]{%
	\ifcase#1\relax
	\or
	\rslash\or
	\rslash\mskip-5.5mu\rslash\or
	\rslash\mskip-5.5mu\rslash\mskip-5.5mu\rslash%
	\fi
}
\newcommand{\rslash}{\raisebox{.15ex}{/}}
\numberwithin{equation}{section}
\theoremstyle{plain}
\newtheorem{lemma}{Lemma}[section]
\newtheorem{proposition}[lemma]{Proposition}
\newtheorem{proposition/definition}[lemma]{Proposition/Definition}
\newtheorem{theorem}[lemma]{Theorem}
\theoremstyle{definition}
\newtheorem{definition}[lemma]{Definition}
\newtheorem{remark}[lemma]{Remark}
\newtheorem{example}[lemma]{Example}
\DeclareRobustCommand{\SkipTocEntry}[5]{}
\newcommand{\Mod}{\ \text{mod}\ }
\newcommand{\calE}{\mathcal{E}}
\newcommand{\bbC}{\mathbb{C}}
\newcommand{\bbI}{\mathbb{I}}
\newcommand{\bbO}{\mathbb{O}}
\newcommand{\bbR}{\mathbb{R}}
\newcommand{\bbZ}{\mathbb{Z}}
\renewcommand{\theta}{\vartheta}
\newcommand{\Ltilde}{\widetilde{L}}
\newcommand{\Utilde}{\widetilde{L}_U}
\title{Homogeneous G-structures}
\def\author@andify{%
	\nxandlist {\unskip ,\penalty-1 \space\ignorespaces}%
	{\unskip {} \@@and~}%
	{\unskip \penalty-2 \space \@@and~}%
}
\author[A. G. Tortorella]{Alfonso Giuseppe Tortorella}
\address{Department of Mathematics, KU Leuven, Celestijnenlaan 200B - 3001 Leuven, Belgium.}
\email{\href{mailto:alfonsogiuseppe.tortorella@kuleuven.be}{alfonsogiuseppe.tortorella@kuleuven.be}}
\author[L. Vitagliano]{Luca Vitagliano}
\address{DipMat, Università degli Studi di Salerno, via Giovanni Paolo II n◦ 123, 84084 Fisciano (SA), Italy.}
\email{\href{mailto:lvitagliano@unisa.it}{lvitagliano@unisa.it}}
\author[O. Yudilevich]{Ori Yudilevich}
\address{Department of Mathematics, KU Leuven, Celestijnenlaan 200B - 3001 Leuven, Belgium.}
\email{\href{mailto:ori.yudilevich@kuleuven.be}{ori.yudilevich@kuleuven.be}}
\keywords{}
\subjclass[2010]{53C10 (Primary), 
				53D10 
			}
\begin{document}

\begin{abstract}
	The theory of $G$-structures provides us with a unified framework for a large class of geometric structures, including symplectic, complex and Riemannian structures, as well as foliations and many others. Surprisingly, contact geometry -- the ``odd-dimensional counterpart'' of symplectic geometry -- does not fit naturally into this picture. In this paper, we introduce the notion of a homogeneous $G$-structure, which encompasses contact structures, as well as some other interesting examples that appear in the literature. 
\end{abstract}

\maketitle

\tableofcontents

\section{Introduction}

The theory of $G$-structures places a variety of geometric structures on equal footing, the idea being to encode a structure on a manifold $M$ by its set of compatible frames, which (in many interesting examples) forms a reduction of the frame bundle of the manifold to a structure group $G\subset \mathrm{GL}_n(\mathbb{R})$ (with $n = \dim M$). The group $G$ plays a key role in the theory, namely that of the linear model for the geometric structure. For example, a symplectic manifold induces a reduction of its frame bundle to the symplectic group, complex manifolds are modeled by the complex general linear group, Riemannian manifolds by the orthogonal group, volume forms by the special linear group, and so forth (see \cite{Kobayashi,Sternberg,Crainic} for introductions to the theory of $G$-structures).

The pattern that repeats itself in each example is as follows: every \textit{structure}, say one modeled by the group $G$, has a corresponding \textit{almost structure} where the integrability axiom is removed. The instances of the \textit{almost structure} that a given manifold admits are in one-to-one correspondence with reductions of the frame bundle of the manifold to $G$, and, of those, the instances of the \textit{structure} correspond to so-called \textit{integrable} reductions, which means that the manifold admits an atlas of coordinate charts that are compatible with the reduction (see Section \ref{section:homogeneousintegrability} for more details). For example, almost symplectic structures (i.e.~non-degenerate 2-forms) on a given manifold are in one-to-one correspondence with reductions of the frame bundle of the manifold to the symplectic group, and the symplectic structures (i.e.~\textit{closed} non-degenerate 2-forms) correspond to the integrable reductions. In this case, integrability is equivalent to the existence of an atlas consisting of Darboux charts. 

Contact structures, albeit being so similar to symplectic structures (most notably, due to the contact version of Darboux's theorem~\cite{Geiges}), do not fit into this picture. While the frame bundle of a (co-orientable) contact manifold can be reduced to the group $\mathrm U(n)\times 1$  (see~\cite[Ch. 5, Prop. 1.3]{Yano}), the reduction is not canonical, and, more problematically, the integrability axiom of the structure does not translate to the condition of the reduction being integrable as a $G$-structure\footnote{For more details, we recommend the mathoverflow discussion \href{https://mathoverflow.net/questions/281256/do-contact-and-cr-structures-have-corresponding-g-structures}{https://mathoverflow.net/questions/281256/do-contact-and-cr-structures-have-corresponding-g-structures}.}. In this paper, we provide a solution to this anomaly by introducing the notion of a \textit{homogeneous $G$-structure}. Let us illustrate the general idea by explaining what happens in the special case of contact structures. 

\subsection*{Contact Structures as Homogeneous Symplectic Structures}

A contact structure on a manifold $M$ is a corank-one distribution $H\subset TM$ which is maximally non-integrable (i.e. the curvature 2-form of $H$, rather than vanishing as in the integrable case, is non-degenerate). Let us write $L:=TM/H$ for the line bundle associated with $H$ and $\Ltilde :=L^*\backslash\{0\}$ for the complement of the zero section of the dual. The latter has the structure of a principal bundle when equipped with the obvious projection map $p:\Ltilde\to M$ and the action
\begin{equation}
\label{eqn:RactionMtilde}
h:\mathbb{R}^\times \times \Ltilde \to \Ltilde ,\hspace{1cm} (r,\epsilon)\mapsto h_r(\epsilon) = r\epsilon
\end{equation}
of the multiplicative group $\mathbb{R}^\times:=(\mathbb{R}\backslash\{0\},\cdot)$. 

A contact structure $H$ on $M$ induces a symplectic structure on $\Ltilde$ via a construction known as the ``symplectization''.  The symplectic form, which we denote by $\omega_H\in\Omega^2(\Ltilde)$, is obtained by pulling back the quotient map $TM\to TM/H = L$, viewed as an $L$-valued 1-form on $M$, to a usual 1-form on $\Ltilde$, and then applying the de Rham differential. Apart from being closed and non-degenerate, this 2-form also satisfies the homogeneity property
\begin{equation}\label{eq:HSF}
h_r^*\,\omega_H = r \omega_H,\hspace{1cm} \forall\, r\in\mathbb{R}^\times.
\end{equation}
Accordingly, we say that $\omega_H$ is \textit{homogeneous of degree 1}, since $r$ appears to the first power on the right hand side. (In the construction of $\Ltilde$, since the action \eqref{eqn:RactionMtilde} of $\mathbb{R}^\times$ restricts to an action of the positive reals $\mathbb{R}^\times_+$, one may wonder whether $\Ltilde$ divides into two connected components such that $\mathbb{R}^\times_+$ acts transitively on the fibers of each component, in which case it may be sufficient to take $\Ltilde$ to be one of these components in the story that follows. Obviously, in general, is not the case, because $TM/H$ may fail to be trivializable. A simple example of this is when $M=\mathbb{R}^k\times \mathbb{RP}^{k-1}$ and the contact structure is the canonical one constructed in Example (4) on p. 429 of \cite{Gray}).

Conversely, given a line bundle $L$ over $M$, any homogeneous of degree 1 symplectic form $\omega\in\Omega^2(\Ltilde)$ induces a contact structure on $M$ by contraction with the infinitesimal generator of the action $h$ (known as the Euler vector field). Indeed, by the homogeneity property, the resulting 1-form descends to an $L$-valued 1-form on $M$ whose kernel is a contact structure. Two such pairs $(L,\omega)$ and $(L',\omega')$ may induce the same contact structure on $M$, but, when they do, they are related by an equivalence, namely a vector bundle isomorphism $L \cong L'$ covering the identity map under which $\omega$ corresponds to $\omega'$ (auto equivalences are sometimes called conformal transformations).

The above constructions are inverse to one another, and, for a fixed manifold $M$, they define a one-to-one correspondence between contact structures $H$ on $M$, on the one hand, and pairs $(L,\omega)$ consisting of a line bundle $L$ over $M$ and a homogeneous of degree 1 symplectic structure $\omega$ on $\Ltilde$ modulo equivalence, on the other \cite{grabowski2015remarks}. 

\subsection*{Homogeneous $G$-Structures}

The ``symplectization'' hints at the idea of encoding a contact structure $H$ on $M$, with an associated line bundle $L = TM/H$, as a reduction of the frame bundle of $\Ltilde=L^*\backslash\{0\}$ to the symplectic group. However, to obtain a one-to-one correspondence as in the above examples of $G$-structures, we must be able to identify those reductions that are ``homogeneous of degree 1'', i.e. that come from a homogeneous of degree 1 symplectic structure. The approach we propose in this paper is to encode the homogeneity property of the symplectic form as the invariance of the corresponding reduction under a \textit{twisted} action of $\mathbb{R}^\times$ on the frame bundle of $\Ltilde$. The key, of course, is in the choice of the twisting. As we will see, the twisting is characterized by a map we call the \textit{degree}, a Lie group homomorphism of the form
\begin{equation*}
\alpha: \mathbb{R}^\times \to N(G)/G,
\end{equation*}
with $G$ the structure group, in this case the symplectic group, and $N(G)$ its normalizer inside the general linear group. In short -- the symplectic structure being homogeneous of degree 1 translates into the reduction being \textit{$\alpha$-homogeneous}, for an appropriate choice of $\alpha$. 

An advantage of this approach is that it can be generalized to other structure groups $G$ and other degree maps $\alpha$. Given any line bundle $L$ over a manifold $M$, any Lie subgroup $G\subset \mathrm{GL}_{n+1}(\mathbb{R})$ (with $n=\dim M$) and any map $\alpha$ as above, we will define the notion of an \textit{$\alpha$-homogeneous $G$-structure} on $\Ltilde$ (Definition \ref{def:homogeneousGstructure}). We will show that in addition to contact structures, the ``odd-dimensional counterparts'' of symplectic structures, our framework also encompasses the ``odd-dimensional counterparts'' of complex structures, a ``contact analogue'' of Riemannian metrics, and an example coming from Poisson geometry, or, more specifically, from structures known as $b$-symplectic manifolds (or also as log-symplectic manifolds).

\subsection*{A Terminological Remark} 

The careful reader has probably noticed already that, by a \emph{homogeneous} $G$-structure, say $S$, we do not at all mean that the group of symmetries of $S$ acts transitively on the underlying manifold. The word \emph{homogeneous} in the title is actually imported from the Poisson geometry literature, where a \emph{homogeneous symplectic form} is a symplectic form $\omega$ satisfying condition (\ref{eq:HSF}) with respect to an appropriate action $h$ of $\mathbb R^\times$.

\subsection*{Outline of the Paper}

The paper is organized as follows: in Section \ref{section:geometrylinebundle} we collect some facts about line bundles. In Section \ref{section:homogeneousGstructures}, we introduce the notion of a homogeneous $G$-structure, and in Section \ref{section:homogeneousintegrability}, the notion of \textit{homogeneous integrability}. In Section \ref{section:contactexample}, we prove that contact structures are in one-to-one correspondence with homogeneous $\mathrm{Sp}$-structures of an appropriate degree $\alpha$ that are homogeneous integrable, where $\mathrm{Sp}$ denotes the symplectic group, and we conclude in Section \ref{section:otherexamples} by proving analogous theorems for the three other examples mentioned above. 

\subsection*{Acknowledgments} 

A.~G.~Tortorella was supported by an FWO postdoctoral fellowship. L.~Vitagliano is a member of the GNSAGA of INdAM. O. Yudilevich was supported by the long term structural funding -- Methusalem grant of the Flemish Government, and by the FWO research project G083118N. The authors would also like to thank the Centre International de Recontre Math\'ematiques (CIRM) and its staff for their generous hospitality during our stay there as part of the Research in Pairs program, and the anonymous referee for his/her useful comments and suggestions.  

\section{Line Bundles}
\label{section:geometrylinebundle}

Let $M$ be a manifold and let $L$ be a line bundle over $M$. We use the standard notation that $C^\infty(M)$ denotes the ring of functions on $M$, $\mathfrak{X}(M)$ its $C^\infty(M)$-module of vector fields, and $\Gamma(L)$ the $C^\infty(M)$-module of sections of $L$, all in the smooth category. When working with contact structures and other examples of homogeneous $G$-structures, we will need to pass from ``usual'' geometry on $M$ to geometry on the line bundle $L$. The picture to keep in mind is the following:
\[ \begin{tikzcd}
\node (1-1) [anchor=east] {\text{Objects on $M$}}; & 
\node (1-2) [anchor=west] {\text{Atiyah objects on $L$}}; &
\node (1-3) [anchor=west] {\text{Homogeneous objects on $\Ltilde=L^*\backslash\{0\}$,}}; \\ 
\arrow[Rightarrow, dashed, from=1-1, to=1-2]
\arrow[leftrightarrow, from=1-2, to=1-3, above]{}{1-1}
\end{tikzcd}
\]
\vspace{-1cm}

\noindent where ``objects'' refers to the basic building blocks -- functions, vector fields, differential forms, etc. Let us explain this in slightly more detail (and we refer the reader to Section 2 of \cite{VitaglianoWade} for further details).

The role that functions on $M$ have in usual geometry is played by sections of $L$ (``Atiyah functions''). These, in turn, are in one-to-one correspondence with homogeneous of degree 1 functions on $\Ltilde$ (i.e.~functions $f\in C^\infty(\Ltilde)$ satisfying $h_r^*f=rf$ for all $r\in\mathbb{R}^\times$) via the correspondence $\lambda\in\Gamma(L)\mapsto \widetilde{\lambda}\in C^\infty(\Ltilde)$, where $\widetilde{\lambda}(\epsilon) := \epsilon(\lambda(p(\epsilon)))$. 

Vector fields on $M$ are replaced by derivations of $\Gamma(L)$ (``Atiyah vector fields''), i.e.~linear maps
\begin{equation*}
\Delta:\Gamma(L)\to \Gamma(L)
\end{equation*}
for which there exists a (necessarily unique) vector field $X_\Delta\in\mathfrak{X}(M)$ (the symbol of $\Delta$) such that
\begin{equation*}
\Delta(f\lambda) = f\Delta(\lambda) + X_\Delta(f) \lambda,\hspace{1cm} \forall\,\lambda\in\Gamma(L),\;f\in C^\infty(M).
\end{equation*} 
These are in one-to-one correspondence with homogeneous of degree 0 vector fields on $\Ltilde$ (i.e.~vector fields $X\in\mathfrak{X}(\Ltilde)$ satisfying $(h_r)_*X=X$ for all $r\in\mathbb{R}^\times$) via the correspondence $\Delta \mapsto \widetilde{\Delta}\in\mathfrak{X}(\Ltilde)$, where $\widetilde{\Delta}(\widetilde{\lambda}) := \widetilde{\Delta(\lambda)}$, for all $\lambda \in \Gamma (L)$. Under this correspondence, the identity operator $\bbI : \Gamma (L) \to \Gamma (L)$ corresponds to the infinitesimal generator $\mathcal E$ of the action $h$, namely the restriction to $\Ltilde$ of the Euler vector field on $L^\ast$.

A useful point of view to take is that derivations of $\Gamma(L)$ can be realized as the sections of a Lie algebroid $DL$ over $M$, called the \textit{Atiyah algebroid} of $L$ (see Example 3.3.4 in \cite{Mackenzie} or Section 2 of \cite{Vitagliano}). This is the Lie algebroid whose fiber at $x\in M$ consists of all pointwise derivations at $x$, i.e.~linear maps $\Delta_x:\Gamma(L)\to L_x$ for which there exists a (necessarily unique) vector $X_{\Delta_x}\in T_xM$ such that $\Delta_x(f\lambda) = f(x)\Delta_x(\lambda) + X_{\Delta_x}(f) \lambda_x$ for all $\lambda\in\Gamma(L)$ and $f\in C^\infty(M)$. Its bracket is the commutator bracket (of derivations) and its anchor is the symbol map $DL\to TM,\; \Delta_x\mapsto X_{\Delta_x}$. 

Going back to the picture above, one should think that the tangent bundle of $M$ is replaced by the Atiyah algebroid of $DL$. This allows us to complete the picture by replacing differential forms $\Omega^\bullet(M)$ on $M$ by differential forms on the Atiyah algebroid $DL$ with values in $L$ (``Atiyah forms''):
\begin{equation*}
\Omega^\bullet(DL;L):= \Gamma(\Lambda^\bullet (DL^*)\otimes L).
\end{equation*}
These, in turn, are in one-to-one correspondence with homogeneous of degree 1 differential forms on $\Ltilde$ (i.e. differential forms $\Omega \in \Omega^\bullet (\Ltilde)$ satisfying $(h_r)^\ast \Omega = r\Omega$ for all $r \in \bbR^\times$) via the correspondence $\omega \in \Omega^k (DL; L) \mapsto \widetilde \omega \in \Omega^k (\Ltilde)$, where $\widetilde \omega (\widetilde \Delta_1, \ldots, \widetilde \Delta_k) = \widetilde{\omega(\Delta_1, \ldots, \Delta_k)}$, for all $\Delta_1, \ldots, \Delta_k \in \Gamma (DL)$. Moreover, the de Rham differential $d:\Omega^\bullet(M)\to \Omega^{\bullet+1}(M)$ is replaced by the Lie algebroid differential $d_D:\Omega^\bullet(DL;L)\to \Omega^{\bullet+1}(DL;L)$ with values in the tautological representation $\Gamma(DL)\times \Gamma(L) \to \Gamma(L),\; (\Delta,\lambda) \mapsto \Delta(\lambda)$, which, under the one-to-one correspondence, is mapped to the usual de Rham differential $d:\Omega^\bullet(\Ltilde)\to \Omega^{\bullet+1}(\Ltilde)$. Note that the Atiyah complex $(\Omega^\bullet(DL;L),d_D)$ is acyclic (any closed form is exact), with $i_\bbI: \Omega^\bullet(DL;L) \to \Omega^{\bullet-1}(DL;L)$, insertion of the identity operator, acting as a homotopy operator. Also note that, since $\mathbb I$ spans the kernel of the symbol map $DL \to TM$, every Atiyah form of the type $i_\bbI \Omega$, with $\Omega \in \Omega^{k+1} (DL;L)$, descends to a unique $L$-valued form on $M$, $\vartheta \in \Omega^k (M;L)$, defined by $\vartheta (X_{\Delta_1}, \ldots, X_{\Delta_k}) = i_{\bbI} \Omega (\Delta_1, \ldots, \Delta_k)$, for all $\Delta_1, \ldots, \Delta_k \in \Gamma (DL)$.

\begin{remark}\label{rem:phi-hom}
In addition to homogeneous of degree $1$ functions on $\Ltilde$, we could also consider homogeneous functions of different degrees. In fact, for any Lie group homomorphism $\phi : \bbR^\times \to \bbR^\times$, we can consider \textbf{$\phi$-homogeneous functions} on $\Ltilde$, i.e. functions $f \in C^\infty (\Ltilde)$ such that 
$h_r^\ast f = \phi(r) f$ for all $r \in \bbR^\times$. Writing $p : \phi (L) \to M$ for the associated line bundle constructed out of the principal bundle $\Ltilde$ and the Lie group homomorphism $\phi$ (seen as a representation of the structure group $\bbR^\times$ on $\bbR$), sections of $\phi (L)$ are in one-to-one correspondence with $\phi$-homogeneous functions via the correspondence $\lambda \in \Gamma (\phi(L)) \mapsto \widetilde \lambda \in C^\infty (\Ltilde)$, with $\widetilde \lambda (\epsilon) := s$, where $s \in \bbR$ the unique real number such that $\lambda (p (\epsilon)) = [(\epsilon, s)]$. When $\phi$ is the trivial homomorphism, i.e. $\phi (r) = 1$ for all $r \in \bbR^\times$, then $\phi (L)$ is the trivial line bundle $\bbR_M = M \times \bbR \to M$, and $\phi$-homogeneous functions are homogeneous of degree $0$ functions (functions $f\in C^\infty (\Ltilde)$ such that $h_r^\ast f = f$ for all $r \in \bbR^\times$
). These, of course, are simply functions on the base $M$. When $\phi$ is the identity, then $\phi(L) = L$ and $\phi$-homogeneous functions are homogeneous of degree $1$ functions.

We also note that when $\phi : \bbR^\times \to \bbR^\times$ is a Lie group homomorphism with a non-trivial associated Lie algebra homomorphism (i.e.~$\phi$ is not locally constant), derivations of $\phi (L)$ are again in one-to-one correspondence with homogeneous of degree $0$ vector fields on $\Ltilde$ via the correspondence $\Delta \mapsto \widetilde \Delta$  given by the same formula as above, $\widetilde \Delta (\widetilde \lambda) = \widetilde{\Delta (\lambda)}$ for all $\lambda \in \Gamma (\phi (L))$. It follows that derivations of $\phi (L)$ are also in one-to-one correspondence with derivations of $L$, and hence this correspondence establishes a canonical Lie algebroid isomorphism $D\phi (L) \cong DL$ (beware that this works only when $\phi$ is not locally constant).
\end{remark}

%
%

\section{Homogeneous $G$-Structures}
\label{section:homogeneousGstructures}

In this section, we introduce the notion of a \textit{homogeneous $G$-structure}. Recall first that a $G$-structure on an $n$-dimensional manifold $M$, with $G\subset \mathrm{GL}_n(\mathbb{R})$ a Lie subgroup, is a reduction of the frame bundle of $M$,
\begin{equation*}
\mathrm{Fr}(M) = \{\; \psi:\mathbb{R}^n \xrightarrow{\cong} T_x M \; | \; x \in M \;\},
\end{equation*}
to the group $G$. Spelled out, it is a submanifold $S\subset \mathrm{Fr}(M)$ that: 1) is invariant under the restriction of the right action of $\mathrm{GL}_n(\mathbb{R})$,
\begin{equation*}
\mathrm{Fr}(M) \times \mathrm{GL}_n(\mathbb{R}) \to \mathrm{Fr}(M),\hspace{1cm} (\psi, g) \mapsto \psi \circ g,
\end{equation*}
to the subgroup $G$, and 2) has the structure of a principal $G$-bundle over $M$ when equipped with the restrictions of the action and the projection.


Now, let $L$ be a line bundle over an $n$-dimensional manifold $M$, and recall that $\Ltilde:= L^*\backslash\{0\}$ and that $p:\Ltilde\to M$ denotes the projection. Given a section of the frame bundle of $\Ltilde$,
\begin{center}
	\begin{tikzcd}
		\node (1-1) [left = 0.5cm]  {\mathrm{Fr}(\Ltilde)} ;  
		\node (2-1) [right = 0.5cm] {\Ltilde,};
		\arrow[from=1-1, to=2-1]
		\arrow[from=2-1, to=1-1, "\sigma" {font=\normalsize, right, yshift = 0.2cm, xshift= -0.15cm}, yshift = -0.1cm,  start anchor = north west, end anchor = north east,  bend right = 35]
	\end{tikzcd}
\end{center}
or, in short, a \textbf{frame of $\Ltilde$}, there exists a (necessarily unique and smooth) map
\begin{equation*}
A_\sigma:\mathbb{R}^\times\times\Ltilde\to\mathrm{GL}_{n+1}(\mathbb{R}),\hspace{1cm} (r,\epsilon)\mapsto A_\sigma(r,\epsilon),
\end{equation*}
that satisfies $\sigma(r\epsilon) = (h_r)_* \circ \sigma(\epsilon) \circ A_\sigma(r,\epsilon)^{-1}$, for all $\epsilon\in\Ltilde$ and  $r\in\mathbb{R}^\times$. Thus, $A_\sigma$ measures how $\sigma$ varies along the orbits of the action of $\mathbb{R}^\times$ on $\Ltilde$. 

\begin{definition}
	A frame $\sigma$ of $\Ltilde$ is \textbf{homogeneous} if $A_\sigma(r,\epsilon)$ is independent of $\epsilon$, i.e. if $A_\sigma$ descends to a map
	\begin{equation}
	\label{eqn:AinducedbySigma}
	A_\sigma:\mathbb{R}^\times\to \mathrm{GL}_{n+1}(\mathbb{R}).
	\end{equation}
\end{definition}

\begin{lemma}
	If a frame $\sigma$ of $\Ltilde$ is homogeneous, then $A_\sigma$ is a Lie group homomorphism, and it induces a left action
	\begin{equation}
	\label{eqn:twistedRactiononFr}
	\mathbb{R}^\times \times \mathrm{Fr}(\Ltilde) \to \mathrm{Fr}(\Ltilde),\hspace{1cm} (r,\psi) \mapsto r\cdot_{A_\sigma}\psi := (h_r)_* \circ \psi \circ A_\sigma(r)^{-1}.
	\end{equation}
\end{lemma}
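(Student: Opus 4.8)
The plan is to extract both assertions directly from the defining property of a homogeneous frame, namely
\[
\sigma(r\epsilon) = (h_r)_* \circ \sigma(\epsilon) \circ A_\sigma(r)^{-1}, \qquad r \in \bbR^\times,\ \epsilon \in \Ltilde,
\]
together with the two structural facts that $h$ is an action of $\bbR^\times$ on $\Ltilde$ --- so that $h_1 = \id_{\Ltilde}$ and, by functoriality of the pushforward, $(h_r)_* \circ (h_s)_* = (h_r \circ h_s)_* = (h_{rs})_*$ --- and that every map occurring in the formula is a linear isomorphism, hence cancellable.

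First I would show that $A_\sigma$ is a group homomorphism. Given $r, s \in \bbR^\times$ and any $\epsilon \in \Ltilde$, I would compute $\sigma((rs)\epsilon)$ in two ways: applying the defining relation once with the scalar $rs$ gives $(h_{rs})_* \circ \sigma(\epsilon) \circ A_\sigma(rs)^{-1}$, while writing $(rs)\epsilon = r(s\epsilon)$ and applying the relation twice gives $(h_r)_* \circ (h_s)_* \circ \sigma(\epsilon) \circ A_\sigma(s)^{-1} \circ A_\sigma(r)^{-1}$. Using $(h_r)_* \circ (h_s)_* = (h_{rs})_*$ and cancelling the invertible maps $(h_{rs})_*$ and $\sigma(\epsilon)$ on the left leaves $A_\sigma(rs)^{-1} = A_\sigma(s)^{-1}A_\sigma(r)^{-1}$, that is, $A_\sigma(rs) = A_\sigma(r)A_\sigma(s)$. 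Since $A_\sigma$ is smooth --- being the restriction to a slice $\bbR^\times \times \{\epsilon_0\}$ of the smooth map $A_\sigma$ on $\bbR^\times \times \Ltilde$ --- it is a Lie group homomorphism; in particular $A_\sigma(1) = \id$.

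Next I would verify that \eqref{eqn:twistedRactiononFr} defines a left action. For well-definedness, if $\psi : \bbR^{n+1} \xrightarrow{\cong} T_\epsilon \Ltilde$ is a frame, then $r \cdot_{A_\sigma} \psi = (h_r)_* \circ \psi \circ A_\sigma(r)^{-1}$ is a composite of the linear isomorphisms $A_\sigma(r)^{-1}$, $\psi$ and $(h_r)_* : T_\epsilon \Ltilde \to T_{r\epsilon}\Ltilde$, hence again a frame of $\Ltilde$ (over $r\epsilon$), and $(r, \psi) \mapsto r \cdot_{A_\sigma} \psi$ is smooth since all of its ingredients are; thus the assignment is a smooth map $\bbR^\times \times \mathrm{Fr}(\Ltilde) \to \mathrm{Fr}(\Ltilde)$ covering the action $h$ on $\Ltilde$. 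The unit axiom $1 \cdot_{A_\sigma} \psi = \psi$ follows from $h_1 = \id_{\Ltilde}$ and $A_\sigma(1) = \id$. Finally,
\[
r \cdot_{A_\sigma}\bigl(s \cdot_{A_\sigma} \psi\bigr) = (h_r)_* \circ (h_s)_* \circ \psi \circ A_\sigma(s)^{-1} \circ A_\sigma(r)^{-1} = (h_{rs})_* \circ \psi \circ A_\sigma(rs)^{-1} = (rs) \cdot_{A_\sigma} \psi,
\]
where the middle equality uses $(h_r)_* \circ (h_s)_* = (h_{rs})_*$ and the homomorphism property $A_\sigma(r)A_\sigma(s) = A_\sigma(rs)$ established above.

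I do not expect any genuine obstacle: the computation is elementary. The only points deserving attention are keeping track of the composition orders --- so that the cancellation in the first step yields a homomorphism and not an anti-homomorphism --- and noting explicitly that each map in sight is invertible, which is precisely what licenses the cancellations; functoriality of the pushforward along the diffeomorphisms $h_r$ is invoked repeatedly.
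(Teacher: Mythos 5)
Your proof is correct and follows essentially the same route as the paper: computing $\sigma(rs\epsilon)$ in two ways to extract the homomorphism property $A_\sigma(rs)=A_\sigma(r)A_\sigma(s)$, after which the action axioms are a direct check (the paper simply calls this second part ``straightforward,'' while you spell it out, including smoothness and the unit axiom). No gaps.
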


\begin{proof}
	Let $r,s\in\mathbb{R}^\times$ and $\epsilon\in\Ltilde$. Since $\sigma(r\epsilon) = (h_r)_* \circ \sigma(\epsilon) \circ A_\sigma(r)^{-1}$, then
	\begin{equation*}
	\begin{split}
	\sigma(rs\epsilon) &= (h_{rs})_*\circ \sigma(\epsilon)\circ A_\sigma(rs)^{-1}, \\
	\sigma(rs\epsilon) &= (h_{r})_*\circ \sigma(s\epsilon)\circ A_\sigma(r)^{-1} = (h_{rs})_*\circ \sigma(\epsilon)\circ A_\sigma(s)^{-1}  A_\sigma(r)^{-1},
	\end{split}
	\end{equation*}
	and thus $A_\sigma(rs) = A_\sigma(r)A_\sigma(s)$. The second assertion is now straightforward. 
\end{proof}

\begin{remark}
	\label{rem:characterization_A's}
	Lie group homomorphisms of the type $A:\bbR^\times\to\mathrm{GL}_{n+1}(\bbR)$ are in one-to-one correspondence with pairs $(B,C)\in\mathfrak{gl}_{n+1}(\bbR)\times\mathrm{GL}_{n+1}(\bbR)$ that satisfy
	\begin{equation}
	\label{eq:rem:characterization_A's:constraint}
	C^2=\bbI\quad \text{and}\quad C\exp(Bt)=\exp(Bt)C, \hspace{1cm} \text{for all}\ t\in\bbR.
	\end{equation}
	In one direction, one sets $B:=\mathsf{Lie}(A)(1)\in\mathfrak{gl}_{n+1}(\bbR)$, where $\mathsf{Lie}(A):\bbR\to\mathfrak{gl}_{n+1}(\bbR)$ is the induced Lie algebra homomorphism, and $C:=A(-1)\in\mathrm{GL}_{n+1}(\bbR)$. Conversely, we recover $A$ by
	\begin{equation*}
	\label{eq:rem:characterization_A's:action}
	A(r)=\begin{cases}
	\exp(B\log(r))&\text{if}\ r>0,\\
	C\exp(B\log(|r|))&\text{if}\ r<0.
	\end{cases}
	\end{equation*}
\end{remark} 

While homogeneous frames (and, in general, frames) may fail to exist globally, they always exist locally on \textbf{saturated open subsets} of $\Ltilde$, i.e. open subsets of the type 
\begin{equation*}
\Utilde:= p^{-1}(U) \subset \Ltilde, \hspace{1cm} \text{with $U\subset M$ open,}
\end{equation*}
assuming that $U$ is sufficiently small. Indeed, for any $x\in M$, we may construct an open neighborhood $U\subset M$ such that there exists a section $\eta$ of $\Utilde\to U$ and a local section $\sigma_0$ of $\mathrm{Fr}(\Utilde)\to \Utilde$ defined on a neighborhood of $\eta(U)$. Then, given any Lie group homomorphism $A : \bbR^\times \to \mathrm{GL}_{n+1} (\bbR)$, we define a homogeneous frame $\sigma$ of $\Utilde$ with $A_\sigma=A$ by imposing invariance under the action \eqref{eqn:twistedRactiononFr}, i.e. by setting $\sigma (\epsilon) := (h_{r(\epsilon)})_\ast \circ \sigma_0 (\eta(p(\epsilon))) \circ A (r(\epsilon))^{-1}$ for all $\epsilon\in\Utilde$, where $r(\epsilon)\in \bbR^\times$ is determined by $\epsilon = r(\epsilon)\eta(p(\epsilon))$. We will use the term \textbf{semi-local homogeneous frame} of $\Ltilde$ around $\epsilon$ (or \textbf{semi-local homogeneous section} of $\mathrm{Fr}(\Ltilde)$ around $\epsilon$) for a homogeneous frame defined on a saturated open neighborhood of $\epsilon\in\Ltilde$. 

\begin{definition}
	\label{def:homogeneousGstructure}
	Let $G\subset \mathrm{GL}_{n+1}(\bbR)$ be a Lie subgroup. A \textbf{homogeneous $G$-structure} on $\Ltilde$ (with $\dim \Ltilde = n+1$) is a $G$-structure $S\subset\mathrm{Fr}(\Ltilde)$ such that, for any $\epsilon\in \Ltilde$, 
	\begin{enumerate}
		\item there exists a semi-local homogeneous section $\sigma$ of $S$ around $\epsilon$, say with domain $\Utilde\subset \Ltilde$,
		\item $S|_{\Utilde}$ is preserved by the action \eqref{eqn:twistedRactiononFr} induced by $A_\sigma$, i.e. 
		$r\cdot_{A_\sigma} S|_{\Utilde}=S|_{\Utilde}$ for all $r\in \mathbb{R}^\times$.
	\end{enumerate}
\end{definition}

Note that homogeneous $G$-structures can be restricted to saturated open subsets, namely if $S$ is a homogeneous $G$-structure on $\Ltilde$, then $S|_{\Utilde}$ is a homogeneous $G$-structure on $\Utilde$.

The second condition in the above definition has the following useful characterization:

\begin{lemma}\label{lem:N(G)}
	Let $S$ be a $G$-structure on $\Ltilde$ and let $\sigma$ be a homogeneous section of $S$ (i.e. a homogeneous frame of $\Ltilde$ with values in $S$). Then $r\cdot_{A_\sigma} S=S$ for all $r\in \mathbb{R}^\times$ if and only if $A_\sigma$ takes values in the normalizer $N(G)$ of $G$.
\end{lemma}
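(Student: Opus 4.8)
The plan is to reduce the statement to a single conjugation identity in the fibres of $S$. The first step is to observe that the defining relation of $A_\sigma$, read through the definition of the twisted action \eqref{eqn:twistedRactiononFr}, gives $r\cdot_{A_\sigma}\sigma(\epsilon) = \sigma(r\epsilon)$. Since $S$ is a $G$-structure, each fibre $S_\epsilon$ is a single free orbit of the right $G$-action $\psi\mapsto\psi\circ g$ based at the frame $\sigma(\epsilon)$, so writing a general element of $S_\epsilon$ as $\sigma(\epsilon)\circ g$ with $g\in G$ I would upgrade this to
\[
r\cdot_{A_\sigma}\bigl(\sigma(\epsilon)\circ g\bigr) \;=\; (h_r)_\ast\circ\sigma(\epsilon)\circ g\circ A_\sigma(r)^{-1} \;=\; \sigma(r\epsilon)\circ\bigl(A_\sigma(r)\,g\,A_\sigma(r)^{-1}\bigr),
\]
using $(h_r)_\ast\circ\sigma(\epsilon) = \sigma(r\epsilon)\circ A_\sigma(r)$. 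Essentially all of the lemma is contained in this formula, read in the two directions.

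For the ``if'' direction I would assume $A_\sigma(r)\in N(G)$ for all $r$; then $A_\sigma(r)\,g\,A_\sigma(r)^{-1}\in G$, so the displayed identity shows $r\cdot_{A_\sigma}S_\epsilon\subseteq S_{r\epsilon}$ for every $\epsilon$, hence $r\cdot_{A_\sigma}S\subseteq S$. Since $\cdot_{A_\sigma}$ is a genuine left action (preceding Lemma), applying this inclusion also to $r^{-1}$ gives $S = r\cdot_{A_\sigma}(r^{-1}\cdot_{A_\sigma}S)\subseteq r\cdot_{A_\sigma}S$, so $r\cdot_{A_\sigma}S = S$.

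For the ``only if'' direction I would assume $r\cdot_{A_\sigma}S = S$ for all $r$, fix $r$ and $g\in G$, and choose any $\epsilon$. Then $r\cdot_{A_\sigma}(\sigma(\epsilon)\circ g)$ lies in $S$ and is a frame over the point $r\epsilon$, hence lies in $S_{r\epsilon}$; comparing with the displayed identity and using that $G$ acts freely and transitively on $S_{r\epsilon}$ through $\sigma(r\epsilon)$ forces $A_\sigma(r)\,g\,A_\sigma(r)^{-1}\in G$. As $g$ ranges over $G$ this says $A_\sigma(r)\,G\,A_\sigma(r)^{-1}\subseteq G$; repeating the argument with $r^{-1}$ and using $A_\sigma(r^{-1}) = A_\sigma(r)^{-1}$ (as $A_\sigma$ is a homomorphism) gives the reverse inclusion, so $A_\sigma(r)\in N(G)$.

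The argument is essentially bookkeeping, so I do not expect a genuine obstacle; the points that need a little care are that $S_\epsilon$ is a free $G$-torsor (so that the conjugation relation can be extracted unambiguously), that the domain of $\sigma$ is $\mathbb{R}^\times$-saturated so that $\sigma(r\epsilon)$ makes sense, and that one value of $r$ alone only yields the inclusion $A_\sigma(r)\,G\,A_\sigma(r)^{-1}\subseteq G$ — upgrading to membership in $N(G)$ is where the homomorphism property, applied to $r^{-1}$, is needed.
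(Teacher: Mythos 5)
Your proof is correct and follows essentially the same route as the paper's: both hinge on the identity $r\cdot_{A_\sigma}(\sigma(\epsilon)\circ g)=\sigma(r\epsilon)\circ\bigl(A_\sigma(r)\,g\,A_\sigma(r)^{-1}\bigr)$ and read it in both directions using that each fibre of $S$ is the free $G$-orbit of $\sigma$. The only difference is that you spell out the upgrade from the one-sided inclusions ($r\cdot_{A_\sigma}S\subseteq S$, respectively $A_\sigma(r)GA_\sigma(r)^{-1}\subseteq G$) to equalities by also invoking $r^{-1}$, a point the paper leaves implicit.
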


\begin{proof}
	Assume that $r\cdot_{A_\sigma} S=S$ for all $r\in \mathbb{R}^\times$. For all $\epsilon\in\Ltilde, \; g\in G$ and $r\in\mathbb{R}^\times$, the left hand sides of
	\begin{equation*}
	\begin{split}
	r\cdot_{A_\sigma}(\sigma(\epsilon)\circ g) &= (h_r)_*\circ \sigma(\epsilon)\circ g A_\sigma(r)^{-1}   \\ 
	\sigma(r\epsilon)\circ g' &= (h_r)_*\circ \sigma(\epsilon)\circ A_\sigma(r)^{-1} g',
	\end{split}
	\end{equation*}
	are equal for some $g'\in G$, and the equality on the right hand sides then implies that $A_\sigma(r)gA_\sigma(r)^{-1} = g'$, and hence that $A_\sigma$ takes values in $N(G)$. Conversely, assume that $A_\sigma$ takes values in $N(G)$. Any $\psi\in S$ can be written as $\sigma(\epsilon)\cdot g$ for some $\epsilon\in\Ltilde$ and $g\in G$. Given $r\in\mathbb{R}^\times$, the two right hand sides above are equal for some $g'\in G$ due to the assumption, and so the equality on the left hand sides shows that $r\cdot_{A_\sigma}\psi\in S$. 
\end{proof}

Of course, the above definition of a homogeneous $G$-structure should not depend on the choices of semi-local homogeneous sections:

\begin{proposition}\label{prop:sigma,sigma'}
	Let $S$ be a homogeneous $G$-structure on $\Ltilde$. If $\sigma$ is a homogeneous section of $S$ such that $A_\sigma$ takes values in $N(G)$, then, given any other homogeneous section $\sigma'$ of $S$,  $A_{\sigma'}$ takes values in $N(G)$. Furthermore, 
	\begin{equation*}
	A_\sigma \equiv A_{\sigma'} \Mod G,
	\end{equation*}
in the sense that the compositions of $A_\sigma$ and $A_{\sigma'}$ with the projection $N(G) \to N(G)/G$ are equal.
\end{proposition}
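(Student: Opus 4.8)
The plan is to compare $\sigma$ and $\sigma'$ through their transition function and then feed the resulting identity into the elementary group theory of the pair $G \subset N(G)$. Since $S \to \Ltilde$ is a principal $G$-bundle and both $\sigma$ and $\sigma'$ are sections of it (over a common saturated open subset, to which one may restrict, or over all of $\Ltilde$ if the homogeneous sections happen to be global), there is a unique smooth map $g : \Ltilde \to G$ with $\sigma'(\epsilon) = \sigma(\epsilon) \circ g(\epsilon)$ for all $\epsilon$.

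Next I would insert this into the two homogeneity relations. Starting from $\sigma'(r\epsilon) = (h_r)_\ast \circ \sigma'(\epsilon) \circ A_{\sigma'}(r)^{-1}$, substituting $\sigma'(r\epsilon) = \sigma(r\epsilon) \circ g(r\epsilon)$ and $\sigma'(\epsilon) = \sigma(\epsilon) \circ g(\epsilon)$, and using $\sigma(r\epsilon) = (h_r)_\ast \circ \sigma(\epsilon) \circ A_\sigma(r)^{-1}$, one may cancel the invertible operator $(h_r)_\ast \circ \sigma(\epsilon)$ on both sides to obtain
\begin{equation*}
A_{\sigma'}(r) = g(r\epsilon)^{-1}\, A_\sigma(r)\, g(\epsilon), \qquad \text{for all } \epsilon \in \Ltilde,\ r \in \bbR^\times
\end{equation*}
(the right-hand side is automatically independent of $\epsilon$, as it must be since $\sigma'$ is homogeneous).

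Both claims then follow by inspection of this identity. For the first, $N(G)$ is a subgroup of $\mathrm{GL}_{n+1}(\bbR)$ containing $G$, so from $A_\sigma(r) \in N(G)$ and $g(r\epsilon)^{-1}, g(\epsilon) \in G \subseteq N(G)$ we conclude $A_{\sigma'}(r) \in N(G)$. For the second, I would compute
\begin{equation*}
A_{\sigma'}(r)\, A_\sigma(r)^{-1} = g(r\epsilon)^{-1}\,\bigl(A_\sigma(r)\, g(\epsilon)\, A_\sigma(r)^{-1}\bigr),
\end{equation*}
and observe that, since $A_\sigma(r)$ normalizes $G$, the factor in parentheses lies in $G$; hence $A_{\sigma'}(r)\, A_\sigma(r)^{-1} \in G$. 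As $G$ is normal in $N(G)$ and both $A_\sigma(r)$ and $A_{\sigma'}(r)$ lie in $N(G)$, this is precisely the statement that they have the same image under the projection $N(G) \to N(G)/G$, i.e. $A_\sigma \equiv A_{\sigma'} \Mod G$.

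I do not expect a genuine obstacle here; the only care needed is bookkeeping — comparing the two frames over a common saturated neighbourhood so that $r\epsilon$ stays in the domain whenever $\epsilon$ does, and invoking normality of $G$ in $N(G)$ so that the one-sided condition $A_{\sigma'}(r)A_\sigma(r)^{-1} \in G$ genuinely encodes equality of cosets in $N(G)/G$. The substance of the argument is entirely contained in the conjugation identity expressing $A_{\sigma'}$ in terms of $A_\sigma$ and the transition function $g$.
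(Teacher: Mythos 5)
Your proposal is correct and follows essentially the same route as the paper: compare the two sections via the transition function $g$ with values in $G$, derive the identity $A_{\sigma'}(r) = g(r\epsilon)^{-1}A_\sigma(r)g(\epsilon)$ from the two homogeneity relations, and read off both claims from the group theory of $G \subset N(G)$. The explicit factorization $A_{\sigma'}(r)A_\sigma(r)^{-1} = g(r\epsilon)^{-1}\bigl(A_\sigma(r)g(\epsilon)A_\sigma(r)^{-1}\bigr)$ is just a slightly more spelled-out version of the paper's appeal to normality and coincidence of left and right cosets.
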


\begin{proof}
For all $\epsilon\in\Ltilde$,
	\begin{equation*}
	\sigma'(\epsilon) = \sigma(\epsilon)\circ g(\epsilon)
	\end{equation*}
	for some smooth function $g:\Ltilde\to G$. Since $\sigma$ and $\sigma'$ are both homogeneous, then for all $r\in\mathbb{R}^\times$,
	\begin{equation*}
	\begin{split}
	&\sigma'(r\epsilon) = (h_r)_*\circ \sigma'(\epsilon)\circ A_{\sigma'}(r)^{-1}, \\
	&\sigma(r\epsilon)\circ g(r\epsilon) = (h_r)_*\circ \sigma(\epsilon)\circ A_\sigma(r)^{-1}g(r\epsilon) = (h_r)_*\circ \sigma'(\epsilon)\circ g(\epsilon)^{-1} A_\sigma(r)^{-1}g(r\epsilon).
	\end{split}
	\end{equation*}
	Since the two left hand sides are equal, it follows that
	\begin{equation*}
	A_{\sigma'}(r) = g(r\epsilon)^{-1}A_\sigma(r)g(\epsilon),
	\end{equation*}
	and hence $A_{\sigma'}(r)$ takes values in $N(G)$ (since $G\subset N(G)$, and $N(G)$ is a group), and $A_\sigma(r)$ and $A_{\sigma'}(r)$ belong to the same coset of $G$ in $N(G)$ (since $G$ is normal in $N(G)$, and left and right cosets coincide). 
\end{proof}

A consequence of this proposition is that, given a homogeneous $G$-structure $S$ over $\Ltilde$, there is a canonical Lie group homomorphism 
\begin{equation}
\label{eqn:alpha}
\alpha:\mathbb{R}^\times \to N(G)/G,\hspace{1cm} r\mapsto A_\sigma(r)G,
\end{equation}
associated with every connected component $M_0$ of the base manifold $M$. Here, $\sigma$ is any choice of a homogeneous section of $S|_{\Utilde}$, with $U$ a sufficiently small, non-empty open subset in $M_0$. When this map is the same for all connected components, we call $\alpha$ the \textbf{degree} of $S$, and we say that $S$ is an \textbf{$\alpha$-homogeneous $G$-structure}. A \textbf{lift} of $\alpha$ is any Lie group homomorphism $A : \bbR^\times \to N(G)$ such that $\alpha$ is equal to the composition of $A$ with the projection $N(G)\to N(G)/G$.

\begin{proposition}
\label{prop:any_A}
Let $S$ be an $\alpha$-homogeneous $G$-structure on $\Ltilde$. Given $\epsilon \in \Ltilde$ and a lift $A : \bbR^\times \to N(G)$ of $\alpha$ (if one exists), there exists a semi-local homogeneous section $\sigma$ of $S$ around $\epsilon$ such that $A_\sigma = A$.
\end{proposition}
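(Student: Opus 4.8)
\emph{Strategy.} The idea is that any semi-local homogeneous section of $S$ around $\epsilon$ can be corrected by a suitable $G$-valued gauge transformation so that its associated homomorphism becomes exactly the prescribed lift $A$; the gauge transformation has to be compatible with the $\bbR^\times$-action, and the only thing to check is that it is indeed valued in $G$ (rather than merely in $N(G)$), which is where the hypothesis that $A$ lifts the degree $\alpha$ enters.

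\emph{Step 1: a reference section.} By condition (1) of Definition~\ref{def:homogeneousGstructure} there is a semi-local homogeneous section $\sigma_0$ of $S$ around $\epsilon$, and by condition (2) together with Lemma~\ref{lem:N(G)} its associated Lie group homomorphism $A_{\sigma_0} : \bbR^\times \to \GL_{n+1}(\bbR)$ takes values in $N(G)$. Shrinking its domain, I may assume it to be $\Utilde = p^{-1}(U)$ with $U$ a \emph{connected} open neighbourhood of $p(\epsilon)$ over which $p : \Utilde \to U$ admits a section $\eta$ (this is possible by local triviality, exactly as in the paragraph preceding Definition~\ref{def:homogeneousGstructure}); restricting $\sigma_0$ does not change $A_{\sigma_0}$, so $\sigma_0$ is still a semi-local homogeneous section of $S$ around $\epsilon$. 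As $U$ is connected, $A_{\sigma_0}$ represents the degree of $S$ over the connected component of $M$ containing $p(\epsilon)$, which by $\alpha$-homogeneity is $\alpha$; since $A$ is a lift of $\alpha$ we get $A_{\sigma_0}(r)\,G = \alpha(r) = A(r)\,G$ for all $r \in \bbR^\times$, so, $G$ being normal in $N(G)$,
\[
c(r) := A_{\sigma_0}(r)\,A(r)^{-1} \in G, \qquad \forall\, r \in \bbR^\times,
\]
and $c : \bbR^\times \to G$ is smooth.

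\emph{Step 2: the gauge transformation.} Writing $r(\epsilon')\in\bbR^\times$ for the unique scalar with $\epsilon' = r(\epsilon')\,\eta(p(\epsilon'))$, I set $\gamma : \Utilde \to G$, $\gamma(\epsilon') := c(r(\epsilon'))$, which is smooth and $G$-valued by construction, and define $\sigma(\epsilon') := \sigma_0(\epsilon') \circ \gamma(\epsilon')$ for $\epsilon' \in \Utilde$. Since $\sigma_0$ takes values in $S$, $\gamma$ in $G$, and $S$ is $G$-invariant, $\sigma$ is again a local section of $S$ over $\Utilde$. It remains to check $\sigma$ is homogeneous with $A_\sigma = A$. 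Using $p(r\epsilon') = p(\epsilon')$, hence $r(r\epsilon') = r\,r(\epsilon')$, the homomorphism property of $A_{\sigma_0}$ and $A$, which gives $c(r\,r(\epsilon')) = A_{\sigma_0}(r)\,c(r(\epsilon'))\,A(r)^{-1}$, and the defining identity $\sigma_0(r\epsilon') = (h_r)_\ast \circ \sigma_0(\epsilon') \circ A_{\sigma_0}(r)^{-1}$, one computes
\[
\sigma(r\epsilon') = \sigma_0(r\epsilon') \circ c(r\,r(\epsilon')) = (h_r)_\ast \circ \sigma_0(\epsilon') \circ A_{\sigma_0}(r)^{-1} A_{\sigma_0}(r)\, c(r(\epsilon'))\, A(r)^{-1} = (h_r)_\ast \circ \sigma(\epsilon') \circ A(r)^{-1},
\]
for all $r \in \bbR^\times$ and $\epsilon' \in \Utilde$. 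Hence $A_\sigma(r,\epsilon') = A(r)$ is independent of $\epsilon'$, so $\sigma$ is a semi-local homogeneous section of $S$ around $\epsilon$ with $A_\sigma = A$, as claimed.

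\emph{On the main obstacle.} There is no serious difficulty: the construction is essentially forced once one realizes that a homogeneous frame is freely determined by its restriction to a local section $\eta(U)$ together with a choice of homomorphism, because $\bbR^\times$ acts freely on $\Ltilde$ with the fibres of $p$ as orbits. The single point that requires the hypothesis is that the correction $c$ lands in $G$ and not just in $N(G)$, which is precisely the statement that $A$ and the representative $A_{\sigma_0}$ of the degree lie in the same $G$-cosets; without this, $\sigma = \sigma_0 \circ \gamma$ would leave $S$.
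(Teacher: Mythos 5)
Your construction is exactly the paper's: the paper also takes a reference homogeneous section $\sigma_0$ over a saturated neighbourhood admitting a section $\eta$ of $p$ and sets $\sigma(\epsilon') := \sigma_0(\epsilon') \circ A_{\sigma_0}(r(\epsilon'))A(r(\epsilon'))^{-1}$, which is precisely your gauge correction by $c(r(\epsilon'))$. Your proof is correct and simply spells out the verifications (that $c$ is $G$-valued, so $\sigma$ stays in $S$, and that $A_\sigma = A$) which the paper leaves implicit.
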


\begin{proof}
Start with any homogeneous section $\sigma_0$ of $S|_{\Utilde}$, with $\Utilde$ a sufficiently small saturated open neighborhood of $\epsilon$ such that there exists a section $\eta$ of the projection $\Utilde\to U$. Then set 
\begin{equation*}
\sigma (\epsilon) := \sigma_0 (\epsilon) \circ A_{\sigma_0}(r(\epsilon))A(r(\epsilon))^{-1},
\end{equation*}
where $r(\epsilon) \in \bbR^\times$ is determined by $\epsilon = r(\epsilon) \cdot \eta (p(\epsilon))$.
\end{proof}

\begin{remark}
	Proposition \ref{prop:any_A} gives an obstruction for the existence of a homogeneous $G$-structure with a prescribed degree $\alpha:\mathbb{R}^\times \to N(G)/G$, namely that $\alpha$ must admit a lift to a Lie group homomorphism $A:\mathbb{R}^\times \to N(G)$. For example, this obstruction is non-trivial when $M$ is a point (and so $n = 0$ and $\mathrm{GL}_{n+1}(\bbR) = \bbR^\times$), $G$ is the (closed) subgroup generated by $2$ (in which case $N(G) = \bbR^\times$), and $\alpha : \bbR^\times \to \bbR^\times/G$ is given by
	\[
	\alpha (r) =
	\left\{
	\begin{array}{cl}
	rG & \text{if $r > 0$} \\
	\sqrt{2}|r| G & \text{if $r < 0$.}
	\end{array}
	\right.
	\]
	Here, $\alpha(-1) = \sqrt{2}G$, and, since there is no order two element in the coset $\sqrt{2}G$, $\alpha$ cannot be lifted to a homomorphism $\bbR^\times \to \bbR^\times$. This counter-example can be easily generalized to higher dimensions. When $\alpha$ does admit a lift $A$, while there is still no guarantee for the global existence of an $\alpha$-homogeneous $G$-structure on a given $\Ltilde$. It is, however, sufficient for local existence, since we can construct a homogeneous frame $\sigma$ with $A_\sigma=A$ on a saturated open subset $\Utilde$ (as explained above), and extend it to the unique homogeneous $G$-structure on $\Utilde$ that contains the image of $\sigma$. 
\end{remark}

\section{Homogeneous Integrability}
\label{section:homogeneousintegrability}

We now move on to discuss integrability in the context of homogeneous $G$-structures. Recall that a $G$-structure $S\subset \mathrm{Fr}(M)$ on a manifold $M$ is said to be integrable if around every point in $M$ there exists a coordinate chart $(U,\chi)$ such that the induced frame
\[
\sigma_\chi := \left( \frac{\partial}{\partial \chi^1}, \ldots, \frac{\partial}{\partial \chi^n}\right),
\]
viewed as a local section of $\mathrm{Fr}(M)$, takes values in $S$. In the case of homogeneous $G$-structures, motivated by the examples that will be presented in the following two sections, we are interested in \textit{homogeneous coordinate charts}. As always, $L\to M$ is a line bundle and $\Ltilde = L^*\backslash \{0\}$. 


 \begin{definition}
 A coordinate chart $(V, \chi)$ of $\Ltilde$ is \textbf{homogeneous} if, locally around every point of $V$, the induced frame $\sigma_\chi$ is the restriction of a semi-local homogeneous frame of $\Ltilde$. A homogeneous $G$-structure $S$ on $\Ltilde$ is \textbf{homogeneous integrable} if around every point of $\Ltilde$ there exists a homogeneous coordinate chart such that $\sigma_\chi$ takes values in $S$.
 \end{definition}
 
 \begin{remark}
 In most of the examples that we have of homogeneous $G$-structures, homogeneous integrability is equivalent to integrability (see Theorems \ref{theor:cont}, \ref{theor:sympl_triv}, \ref{theor:complex}). However, the proof in each example is rather different and particular to that case. In the example of Section \ref{sec:orthogonal} we were only able to prove that integrability implies homogeneous integrability in a special case. Summarizing, we do not know if this fact is true in general.
 \end{remark}

The condition for a coordinate chart to be homogeneous can also be rephrased in the following more intrinsic way:

\begin{proposition}
	A coordinate chart $(V, \chi)$ of $\Ltilde$ is homogeneous if for every point $\epsilon_0 \in V$, there exist 
	\begin{enumerate}
		\item an open neighborhood $V_0$ of $\epsilon_0$ in $V$, 
		\item an open neighborhood $I_0$ of $1$ in $\bbR^\times$,
		\item a Lie group homomorphism $(A, b) : \bbR^\times \to \mathrm{Aff}_{n+1}(\bbR) = \mathrm{GL}_{n+1} (\bbR) \ltimes \bbR^{n+1}$,
	\end{enumerate}
	such that $h_r (V_0) \subset V$, and 
	\begin{equation}\label{eq:h*chi}
	h_r^\ast \chi = A(r)\chi + b(r) \quad \text{on $V_0$,}
	\end{equation}
	for all $r \in I_0$. Furthermore, if $\sigma$ is a semi-local homogeneous frame of $\Ltilde$ whose restriction to $V$ is $\sigma_\chi$, then $A_\sigma = A$ on the connected component of the identity $\bbR^\times_+\subset \mathbb{R}^\times$. 
\end{proposition}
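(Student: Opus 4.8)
The plan is to prove the ``if'' statement as asserted, then the ``furthermore'', and finally to indicate why the converse holds too (so that ``rephrased'' is justified). The computational core is a single translation: on an open set $W\subseteq V$ and for $r$ near $1$ with $h_r(W)\subseteq V$, differentiating the candidate identity $h_r^\ast\chi = A(r)\chi + b(r)$ — using $\sigma_\chi(\epsilon) = (d_\epsilon\chi)^{-1}$ and $d_\epsilon(\chi\circ h_r) = d_{r\epsilon}\chi\circ(h_r)_{*,\epsilon}$ — shows it is equivalent to
\[
\sigma_\chi(r\epsilon) = (h_r)_*\circ\sigma_\chi(\epsilon)\circ A(r)^{-1}\qquad\text{for }\epsilon\in W,
\]
that is, to $\sigma_\chi$ satisfying on $W$ the relation that defines $A_\sigma$, with ``$A_\sigma = A$''. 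Differentiation kills the translation term $b(r)$, and going back it reappears as the (necessarily constant, once $W$ is connected) primitive of $d(h_r^\ast\chi - A(r)\chi) = 0$.

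For the ``if'' direction I would fix $\epsilon_0\in V$, take the asserted data $V_0, I_0, (A,b)$ (shrinking $I_0$ to a connected neighbourhood of $1$ inside $\bbR^\times_+$), put $x_0 := p(\epsilon_0)$, and — since $p$ is a submersion — choose an open $U\ni x_0$ together with a section $\eta : U\to\Ltilde$ of $p$ such that $\eta(x_0) = \epsilon_0$ and, after shrinking $U$, $\eta(U)\subseteq V_0$. On $\Utilde := p^{-1}(U)$ I would then build, exactly as in the local construction of semi-local homogeneous frames recalled above (invariance under \eqref{eqn:twistedRactiononFr}), the semi-local homogeneous frame $\sigma(\epsilon) := (h_{r(\epsilon)})_*\circ\sigma_\chi(\eta(p(\epsilon)))\circ A(r(\epsilon))^{-1}$, where $\epsilon = r(\epsilon)\,\eta(p(\epsilon))$; it is homogeneous with $A_\sigma = A$. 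Then I would check $\sigma = \sigma_\chi$ near $\epsilon_0$: for $\epsilon$ close to $\epsilon_0$ one has $r(\epsilon)\in I_0$ and $\eta(p(\epsilon))\in V_0$ by continuity, and applying the displayed relation (the differentiated form of \eqref{eq:h*chi}) at $\epsilon' = \eta(p(\epsilon))$ with $r = r(\epsilon)$ gives $\sigma_\chi(\epsilon) = \sigma_\chi(r\epsilon') = (h_r)_*\circ\sigma_\chi(\epsilon')\circ A(r)^{-1} = \sigma(\epsilon)$. Hence $\sigma_\chi$ is, near every point of $V$, the restriction of a semi-local homogeneous frame, i.e.\ $(V,\chi)$ is homogeneous.

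For the ``furthermore'', assume in addition that $\sigma$ is a semi-local homogeneous frame (defined on some saturated open set) equal to $\sigma_\chi$ on its overlap with $V$, and pick $\epsilon_0$ in that open, nonempty overlap with its data $V_0, I_0, (A,b)$. On a neighbourhood of $\epsilon_0$ contained in the overlap and for $r$ near $1$, both $\sigma(r\epsilon) = (h_r)_*\sigma(\epsilon)A_\sigma(r)^{-1}$ and — by the differentiated form of \eqref{eq:h*chi} — $\sigma_\chi(r\epsilon) = (h_r)_*\sigma_\chi(\epsilon)A(r)^{-1}$ hold; since $\sigma = \sigma_\chi$ there, cancelling the common invertible factor yields $A_\sigma = A$ near $r = 1$, hence on the subgroup it generates, namely $\bbR^\times_+$. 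The converse runs the same computation in reverse: from $\sigma = \sigma_\chi$ on an open $W\ni\epsilon_0$ (supplied by homogeneity of $(V,\chi)$) one gets $d(h_r^\ast\chi - A_\sigma(r)\chi) = 0$ on $W$ for $r$ near $1$, hence $h_r^\ast\chi = A_\sigma(r)\chi + b(r)$ on a connected $V_0\subseteq W$ for $r$ in a small $I_0\ni1$ with $h_r(V_0)\subseteq W$, with $b(r)$ constant; and $h_{rs}^\ast = h_s^\ast h_r^\ast$ forces $b(rs) = A_\sigma(r)b(s) + b(r)$, so $(A_\sigma, b)$ is a local homomorphism into $\mathrm{Aff}_{n+1}(\bbR)$ near $1$. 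The one step that is not purely formal — and the reason the ``furthermore'' only asserts agreement on $\bbR^\times_+$ — is promoting this to an honest Lie group homomorphism $\bbR^\times\to\mathrm{Aff}_{n+1}(\bbR)$: since $\bbR^\times_+\cong(\bbR,+)$ is simply connected, the local homomorphism extends uniquely to $\bbR^\times_+$ (its $\mathrm{GL}$-component necessarily equal to $A_\sigma|_{\bbR^\times_+}$), and precomposing with the absolute-value homomorphism $\bbR^\times\to\bbR^\times_+$ extends it — non-canonically on $\bbR^\times_-$ — to all of $\bbR^\times$, still agreeing with $(A_\sigma,b)$ on $I_0$. I expect this globalization, together with the bookkeeping of nested neighbourhoods, to be the only genuine work; the rest is just the differentiation and integration of \eqref{eq:h*chi}.
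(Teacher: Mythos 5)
Your proposal is correct and follows essentially the same route as the paper: the converse is proved by the same construction $\sigma(\epsilon) = (h_{r(\epsilon)})_\ast\circ\sigma_\chi(\eta(p(\epsilon)))\circ A(r(\epsilon))^{-1}$ from a local section $\eta$, and the forward direction and the ``furthermore'' rest on the same observation that the frame relation $\sigma_\chi(r\epsilon) = (h_r)_\ast\circ\sigma_\chi(\epsilon)\circ A(r)^{-1}$ is the differentiated form of \eqref{eq:h*chi}, yielding a local homomorphism $(A_\sigma,b)$ that extends uniquely to $\bbR^\times_+$ and then (non-canonically, e.g.\ via $r\mapsto|r|$) to all of $\bbR^\times$. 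The only differences are cosmetic: the paper phrases the affinity step as the vanishing of the second partials $\partial^2 h_r^\ast(\chi^k)/\partial\chi^j\partial\chi^i$ and leaves the extension beyond $\bbR^\times_+$ ``arbitrary'', where you integrate the differential identity directly and make the extension explicit.
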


\begin{proof} 
	Begin with a homogeneous coordinate chart $(V, \chi)$ on $\Ltilde$, let $\epsilon_0 \in V$, and let $V_0$ be a connected open neighborhood of $\epsilon_0$ in $V$ such that $\sigma_\chi$
	agrees with a local homogeneous frame, say $\sigma$, in $V_0$. Shrinking $V_0$ if necessary, we can assume that $h_r (V_0) \subset V$ for all $r$ in a sufficiently small interval $I_0 \subset \bbR^\times$ containing $1$. It is easy to see that $A_\sigma (r)$ maps the $i$-th element in the canonical frame of $\bbR^{n+1}$ to
	\[
	\left( \frac{\partial h_r^\ast (\chi^1)}{\partial \chi^i}(\epsilon), \ldots, \frac{\partial h_r^\ast (\chi^{n+1})}{\partial \chi^i} (\epsilon)\right)
	\]
	for all $\epsilon \in V_0$, and $r \in I_0$. As $A_{\sigma}(r)$ is independent of $\epsilon$, we have
	\[
	\frac{\partial^2 h_r^\ast (\chi^{k})}{\partial \chi^j \partial \chi^i} = 0,
	\]
	for all $i,j, k = 1, \ldots, n+1$, hence
	\[
	h_r^\ast \chi = A_\sigma (r) \chi + b(r),
	\]
	for some $b(r) \in \bbR^{n+1}$. From the group property of $h_r$, $(A_\sigma, b)$ is a local Lie group homomorphism. As such, it can be uniquely extended to a Lie group homomorphism from the connected component of the identity $\bbR^\times_+$ of $\bbR^\times$ to $\mathrm{Aff}_{n+1} (\bbR)$. Finally, extend $(A_\sigma, b)$ arbitrarily to a Lie group homomorphism 
	\[
	(A, b) : \bbR^\times \to \mathrm{Aff}_{n+1} (\bbR).
	\]
	
	\bigskip

Conversely, let $(V, \chi)$ be a chart on $\Ltilde$ as in the statement. Put $U_0 = p (V_0)$, and $x_0 = p (\epsilon_0)$. Shrinking $V_0$ if necessary, we can assume that $p : V_0 \to U_0$ is a trivial fiber bundle. Let $\eta$ be a section of $V_0 \to U_0$ such that $\eta (x_0) = \epsilon_0$. As already remarked before, we can construct a semi-local homogeneous frame $\sigma$ on $\Ltilde_{U_0}$ by setting
	$\sigma (\epsilon) := (h_{r(\epsilon)})_\ast \circ \sigma_0 (\eta(p(\epsilon))) \circ A (r(\epsilon))^{-1}$ for all $\epsilon\in\Utilde$, where $r(\epsilon)\in \bbR^\times$ is determined by $\epsilon = r(\epsilon)\eta(p(\epsilon))$. By construction, $A_\sigma = A$. Finally, it easily follows from (\ref{eq:h*chi}), that $\sigma$ and
	$\sigma_\chi$
	agree on $V_0$.
\end{proof}
 
 Note that we do not require that the domain of a homogeneous coordinate chart be a saturated open subset as we do for semi-local homogeneous frames. The reason for this is that charts of this type cannot always be extended to saturated domains, as the following example shows:
 
 \begin{example}
 	Let $M = \mathbb R^n$ with the standard coordinates $(x^1, \ldots, x^n)$, and let $L$ be the trivial line bundle  $\bbR_M:= M \times \bbR \to M$, so that $\Ltilde = \mathbb R^n \times \bbR^\times$. Let $\mu$ be the standard coordinate on $\bbR^\times$, and consider the coordinate chart
 	\[
 	\left( V = \mathbb R^n \times \bbR^\times_+, \chi = ( x^1, \ldots, x^n, \log \mu ) \right)
 	\]
 	on $\Ltilde$. The induced coordinate frame is
 	\[
 	\sigma_\chi = \left(\frac{\partial}{\partial x^1}, \ldots, \frac{\partial}{\partial x^n}, \mu \frac{\partial}{\partial \mu}  \right).
 	\]
 	This frame extends (via the same formula) to a commuting homogeneous section of $\mathrm{Fr} (\Ltilde)$, with $A_\sigma$ the trivial homomorphism, while $(V,\chi)$ cannot be extended to the whole of $\Ltilde$ since $\chi$ is already surjective onto $\mathbb{R}^{n+1}$. 
 \end{example}

\section{Contact Structures as Homogeneous $G$-Structures}
\label{section:contactexample}

Our main and motivating example of a homogeneous $G$-structure is a contact structure. As explained in the introduction, the linear model for a contact structure on an odd dimensional manifold $M$, say with $n = \dim M$, is the symplectic group $\mathrm{Sp}_{k} \subset \mathrm{GL}_{2k}(\bbR)$, with $k = (n+1)/2$, consisting of $2k \times 2k$ matrices $A$ satisfying $A^t J A = J$. Here,
\begin{equation}\label{eq:i_matrix}
J = \left(
\begin{array}{cc}
\bbO & \bbI_k \\
-\bbI_k & \bbO
\end{array}
\right),
\end{equation}
with $\bbI_k$ the $k\times k$ unit matrix. 

\begin{lemma}[{\cite[Theorem 1.10]{Sirola}}]
\label{lem:N(Sp)}
The normalizer $N(\mathrm{Sp}_{k})$ of the symplectic group $\mathrm{Sp}_{k}\subset\mathrm{GL}_{2k}(\bbR)$ fits in the split short exact sequence of Lie group homomorphisms
\begin{equation}
\label{eq:SES_Sp}
	\begin{tikzcd}
	1 \arrow[r]&\mathrm{Sp}_{k}\arrow[r]&N(\mathrm{Sp}_{k})\arrow[r, 
	 "p"]&\mathbb{R}^\times\arrow[r]\arrow[l, bend right=-40, 
	 "A" pos=.55]&1,
	\end{tikzcd}
	\end{equation}
with $p:N(\mathrm{Sp}_{k})\to\bbR^\times$ defined by $p(B)\bbI_{n+1}=J^tB^tJB$. Furthermore, a splitting $A$ is given by
\begin{equation}
\label{eq:SES_Sp_splitting}
A:\bbR^\times\to N(\mathrm{Sp}_{k}),\hspace{1cm} 
	r\mapsto
\begin{pmatrix}
\bbI&\bbO\\
\bbO& r\bbI
\end{pmatrix},
\end{equation}
and therefore $N(\mathrm{Sp}_{k})$ decomposes as the semidirect product of $\mathrm{Sp}_{k}$ and the $1$-dimensional subgroup consisting of matrices of the form $A(r)$, with $r \in \bbR^\times$.
\end{lemma}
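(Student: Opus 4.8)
The plan is to identify $N(\mathrm{Sp}_{k})$ with the group of conformally symplectic linear maps of $\bbR^{2k}$ and to recover $p$ as the ``conformal factor''; the splitting \eqref{eq:SES_Sp_splitting} is then just a choice of one conformally symplectic map per scaling factor, and the semidirect-product decomposition follows formally from the existence of a splitting.

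First I would record the elementary observation that, for $B\in\mathrm{GL}_{2k}(\bbR)$ and $A\in\mathrm{Sp}_{k}$, one has $BAB^{-1}\in\mathrm{Sp}_{k}$ if and only if $A^{t}(B^{t}JB)A=B^{t}JB$; hence $B$ normalizes $\mathrm{Sp}_{k}$ if and only if the bilinear form with Gram matrix $B^{t}JB$ is $\mathrm{Sp}_{k}$-invariant. I claim this holds if and only if $B^{t}JB=\lambda J$ for some $\lambda\in\bbR^{\times}$. The ``if'' direction is a one-line computation --- from $B^{t}JB=\lambda J$ one also gets $(B^{-1})^{t}JB^{-1}=\lambda^{-1}J$, and both identities give at once $B\,\mathrm{Sp}_{k}\,B^{-1}\subseteq\mathrm{Sp}_{k}$ and $B^{-1}\mathrm{Sp}_{k}\,B\subseteq\mathrm{Sp}_{k}$ --- while the ``only if'' direction is the crux, taken up next.

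The step I expect to be the only substantial one is the linear-algebra fact that the only real $2k\times 2k$ matrices $N$ with $A^{t}NA=N$ for every $A\in\mathrm{Sp}_{k}$ are the scalar multiples of $J$. I would argue infinitesimally: differentiating the invariance relation along one-parameter subgroups of $\mathrm{Sp}_{k}$ yields $X^{t}N+NX=0$ for all $X\in\mathfrak{sp}_{k}$, and since $X\in\mathfrak{sp}_{k}$ exactly when $JX$ is symmetric (because $J^{2}=-\bbI$), the general element of $\mathfrak{sp}_{k}$ is $X=-JS$ with $S$ symmetric. The relation then becomes $SJN-NJS=0$ for all symmetric $S$, which (taking $S=\bbI$ first, so that $JN=NJ$) says that $JN$ commutes with every symmetric matrix and is therefore a scalar matrix $c\bbI$; hence $N=cJ^{-1}=-cJ\in\bbR J$. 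Applying this to the invertible matrix $N=B^{t}JB$ finishes the characterization of $N(\mathrm{Sp}_{k})$, with $\lambda\neq 0$ automatic.

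It then remains to assemble the sequence. Define $p(B)$ on $N(\mathrm{Sp}_{k})$ by $B^{t}JB=p(B)J$; since $J^{t}J=\bbI$ this is the same as $p(B)\bbI_{n+1}=J^{t}B^{t}JB$, a polynomial and hence smooth expression, and $(B_{1}B_{2})^{t}J(B_{1}B_{2})=B_{2}^{t}(p(B_{1})J)B_{2}=p(B_{1})p(B_{2})J$ shows $p$ is a group homomorphism; its kernel is visibly $\{B:B^{t}JB=J\}=\mathrm{Sp}_{k}$, giving exactness of \eqref{eq:SES_Sp} at $\mathrm{Sp}_{k}$ and at $N(\mathrm{Sp}_{k})$ (that $N(\mathrm{Sp}_{k})$ is a Lie subgroup of $\mathrm{GL}_{2k}(\bbR)$ is automatic, being the normalizer of a closed subgroup). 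For $A$ as in \eqref{eq:SES_Sp_splitting}, a block computation gives $A(r)^{t}JA(r)=rJ$, so $A(r)\in N(\mathrm{Sp}_{k})$ with $p(A(r))=r$; since $A(r)A(s)=A(rs)$, the map $A$ is a Lie group homomorphism with $p\circ A=\mathrm{id}_{\bbR^{\times}}$, which proves surjectivity of $p$ (exactness at $\bbR^{\times}$) and that the sequence splits. Finally, the existence of a splitting gives the semidirect product: each $B\in N(\mathrm{Sp}_{k})$ factors uniquely as $B=\big(B\,A(p(B))^{-1}\big)\cdot A(p(B))$ with first factor in $\ker p=\mathrm{Sp}_{k}$, and $\{A(r):r\in\bbR^{\times}\}$ is a one-dimensional closed subgroup isomorphic to $\bbR^{\times}$ meeting $\mathrm{Sp}_{k}$ only in $\bbI$.
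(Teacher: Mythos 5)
Your argument is correct and complete. Note that the paper itself does not prove this lemma at all: it is quoted from \cite[Theorem 1.10]{Sirola}, so there is no in-paper proof to compare against; the closest internal analogue is the proof of the corresponding statement for $\mathrm{GL}_k(\bbC)$ (Lemma \ref{lem:N(GL(C))}), which follows Širola's strategy of showing that $J^{-1}B^{-1}JB$ lies in the centralizer of the subgroup and then computing that centralizer explicitly. Your route is a close cousin but not identical: you characterize $N(\mathrm{Sp}_k)$ as the conformal symplectic group by showing that the only $\mathrm{Sp}_k$-invariant bilinear forms are the multiples of $J$, and you prove that key linear-algebra fact infinitesimally ($X^tN+NX=0$ for all $X\in\mathfrak{sp}_k$, then $X=-JS$ with $S$ symmetric, then the commutant of the symmetric matrices is $\bbR\,\bbI$), rather than by exhibiting explicit matrices in the centralizer as the paper does in the complex case. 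Each step checks out: the equivalence $BAB^{-1}\in\mathrm{Sp}_k\Leftrightarrow A^t(B^tJB)A=B^tJB$, the handling of inclusion versus equality of $B\,\mathrm{Sp}_k\,B^{-1}$ and $\mathrm{Sp}_k$ (you get both inclusions in the ``if'' direction), the identity $J^tJ=\bbI$ reconciling your definition of $p$ with the one in the statement, the computation $A(r)^tJA(r)=rJ$, and the formal passage from a splitting to the semidirect-product decomposition, including that $\{A(r)\}\cap\mathrm{Sp}_k=\{\bbI\}$. The infinitesimal argument buys you a clean, coordinate-light proof of the crux; the price is the (standard, easily verified) fact that only scalar matrices commute with all symmetric matrices, which you could spell out in one line via diagonal matrices with distinct entries and the matrices $E_{ij}+E_{ji}$.
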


As a preparation for the theorem below, let us explain how a contact structure is constructed out of an $\alpha$-homogeneous $\mathrm{Sp}_{k}$-structure $S$ on $\Ltilde$, where the relevant degree in this case is simply the identity map $\alpha : \bbR^\times \to N(\mathrm{Sp}_{k})/\mathrm{Sp}_{k} \cong \bbR^\times$, where the last isomorphism is the one induced by \eqref{eq:SES_Sp}. By Proposition \ref{prop:any_A}, around any point in $\Ltilde$, there is a saturated open neighborhood $\Utilde$ and a homogeneous section $\sigma$ of $S|_{\Utilde}$ such that $A_\sigma = A$, where $A$ is the lift of $\alpha$ given by \eqref{eq:SES_Sp_splitting}. Due to the specific form of $A$, the components $X_1, \ldots, X_{k}, Y^1, \ldots, Y^{k} \in \mathfrak{X}(\Utilde)$ of $\sigma$ satisfy the homogeneity conditions
\begin{equation}
\label{eq:symp_frame_hom_cond}
(h_r)_\ast (X_i) = X_i \hspace{0.5cm} \text{and} \hspace{0.5cm} (h_r)_\ast (Y^i) = r Y^i \hspace{1cm} \forall \; r \in \bbR^\times,\;  i = 1,\ldots, k.
\end{equation}
Denoting the components of the dual coframe by $\xi^1, \ldots, \xi^{k}, \eta_1, \ldots, \eta_{k}\in\Omega^1(\Utilde)$, we define a non-degenerate $2$-form $\widetilde \omega \in \Omega^2(\Ltilde)$ on $\Ltilde$ by setting
\[
\widetilde \omega|_{\Utilde} = \xi^1 \wedge \eta_1 + \cdots + \xi^{k} \wedge \eta_{k}
\]
on the saturated open neighborhood $\Utilde$. Due to \eqref{eq:symp_frame_hom_cond}, $\widetilde \omega$ satisfies the homogeneity condition
\[
h^\ast_r \, \widetilde \omega = r \widetilde \omega, \quad \forall\; r \in \bbR^\times,
\]
which, as explained in Section \ref{section:geometrylinebundle}, implies that $\widetilde \omega$ uniquely determines a non-degenerate Atiyah $2$-form $\omega : \wedge^2 DL \to L$.
%
Setting 
\[
\Theta := i_\bbI \omega, \quad \Upsilon := i_\bbI d_D \omega,
\]
where $\bbI\in\Gamma(DL)$ is the identity operator, we have that $i_\bbI \Theta = i_\bbI \Upsilon = 0$, which implies that $\Theta$ and $\Upsilon$ descend to an $L$-valued 1-form $\theta\in\Omega^1(M; L)$ and an $L$-valued $2$-form $\upsilon \in \Omega^2 (M; L)$ uniquely defined by
\begin{equation}\label{eq:theta,upsilon}
\theta (X_\Delta) = \Theta (\Delta), \quad \upsilon (X_\Delta, X_{\Delta'}) = \Upsilon (\Delta, \Delta'), \hspace{1cm} \forall\; \Delta, \Delta' \in \Gamma (DL).
\end{equation}
Recall from Section \ref{section:geometrylinebundle} that $X_\Delta$ denotes the symbol of the derivation $\Delta$.


\begin{theorem}\label{theor:cont}
Let $L\to M$ be a line bundle, with $n=\dim M$ odd, and set $k=(n+1)/2$. The assignment $S \mapsto (\theta, \upsilon)$ described above defines a one-to-one correspondence between
\begin{enumerate}[label=(\roman*)]
\item $\alpha$-homogeneous $\mathrm{Sp}_k$-structures $S$ on $\Ltilde$, with $\alpha: \bbR^\times \to N(\mathrm{Sp}_{k})/\mathrm{Sp}_{k} \cong \bbR^\times$ the identity map, 
\item pairs $(\theta, \upsilon)$ consisting of an $L$-valued one-form $\theta \in \Omega^1 (M; L)$, and an $L$-valued $2$-form $\upsilon \in \Omega^2 (M; L)$ such that
\begin{itemize}
\item $\theta$ is nowhere zero,
\item $\upsilon|_H-R_H$ is a non-degenerate $2$-form on $H:=\ker \theta$, where $R_H$ is the curvature of $H$. 
\end{itemize}
\end{enumerate}
Furthermore, the following conditions are equivalent:
\begin{enumerate}
\item $S$ is homogeneous integrable,
\item $S$ is integrable,
\item $\upsilon = 0$, hence $H = \ker \theta$ is a contact structure.
\end{enumerate}
\end{theorem}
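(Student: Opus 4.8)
The plan is to prove the bijection (i)$\leftrightarrow$(ii) first and then the chain of equivalences, using throughout the dictionary of Section~\ref{section:geometrylinebundle} between homogeneous objects on $\Ltilde$ and Atiyah objects on $L$, together with the fact that $\mathrm{Sp}_k$ is the stabiliser of the standard symplectic form $\omega_0$ on $\bbR^{n+1}$.

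\emph{The bijection.} First I would check that $\widetilde\omega$ is well defined: two homogeneous sections $\sigma,\sigma'$ of $S$ with $A_\sigma=A_{\sigma'}=A$ differ by an $\mathrm{Sp}_k$-valued function, which preserves $\omega_0$, so the locally defined forms patch to a global nondegenerate $\widetilde\omega$, and the coframe computation using \eqref{eq:symp_frame_hom_cond} gives $h_r^\ast\widetilde\omega=r\widetilde\omega$; hence $\widetilde\omega$ corresponds to a nondegenerate Atiyah $2$-form $\omega$. Acyclicity of the Atiyah complex, with $i_\bbI$ as homotopy, gives $\omega=d_Di_\bbI\omega+i_\bbI d_D\omega=d_D\Theta+\Upsilon$, so $\omega$ — and hence $S=\{\psi\in\mathrm{Fr}(\Ltilde):\psi^\ast\widetilde\omega=\omega_0\}$ — is recovered from $(\Theta,\Upsilon)$, equivalently from $(\theta,\upsilon)$, which is injectivity. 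The two conditions in (ii) I would obtain by restricting $\omega$ to the rank-$n$ subbundle $D_HL\subset DL$ of derivations with symbol in $H=\ker\theta$: $\theta$ is nowhere zero, else $\bbI$ lies in the radical of $\omega$; and, using that $\bbI$ is central in $\Gamma(DL)$, the Koszul expansion of $d_D\omega(\bbI,\Delta,\Delta')$ shows $\omega(\Delta,\Delta')=(\upsilon|_H-R_H)(X_\Delta,X_{\Delta'})$ whenever $X_\Delta,X_{\Delta'}\in H$, so, since $(D_HL)^{\perp_\omega}=\langle\bbI\rangle$, the form $\omega$ descends to the nondegenerate form $\upsilon|_H-R_H$ on $H\cong D_HL/\langle\bbI\rangle$. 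For surjectivity I would reverse this: given $(\theta,\upsilon)$ as in (ii), take the Atiyah forms $\Theta,\Upsilon$ with $i_\bbI\Theta=i_\bbI\Upsilon=0$ determined by \eqref{eq:theta,upsilon}, set $\omega:=d_D\Theta+\Upsilon$, verify $i_\bbI\omega=\Theta$ and $i_\bbI d_D\omega=\Upsilon$ from the same homotopy identity, deduce nondegeneracy of $\omega$ from $\theta$ being nowhere zero and $\upsilon|_H-R_H$ being nondegenerate, and set $S:=\{\psi:\psi^\ast\widetilde\omega=\omega_0\}$; this is a $G$-structure, it is homogeneous (straighten a semi-local homogeneous frame against the homogeneous form $\widetilde\omega$), and it has degree $\alpha=\id$ because $h_r^\ast\widetilde\omega=r\widetilde\omega$ forces $A_\sigma(r)$ to rescale $\omega_0$ by $r$, hence to project to $r$ under $N(\mathrm{Sp}_k)/\mathrm{Sp}_k\cong\bbR^\times$ in \eqref{eq:SES_Sp}. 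The two assignments are mutually inverse by construction.

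\emph{The equivalences.} $(1)\Rightarrow(2)$ is immediate. For $(2)\Rightarrow(3)$: in an integrating chart, $\widetilde\omega(\partial_{\chi^a},\partial_{\chi^b})$ equals the constant entry of $\omega_0$, so $\widetilde\omega$ is closed there; covering $\Ltilde$ by such charts yields $d\widetilde\omega=0$, i.e.\ $d_D\omega=0$, hence $\upsilon=i_\bbI d_D\omega=0$, and then $\upsilon|_H-R_H=-R_H$ nondegenerate is exactly the statement that $H=\ker\theta$ is contact. The substantive direction is $(3)\Rightarrow(1)$, a \emph{homogeneous Darboux theorem}: from $\upsilon=0$ we get $d_D\omega=0$, so $\widetilde\omega$ is symplectic, and $\mathcal L_{\mathcal E}\widetilde\omega=\widetilde\omega$ together with $d\widetilde\omega=0$ give $\widetilde\omega=d\widetilde\Theta$ with $\widetilde\Theta:=i_{\mathcal E}\widetilde\omega$, which descends to the nowhere-zero $\theta$. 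Trivialising $L$ locally writes $\theta=\theta_0\lambda$, and $\upsilon=0$ with the identity above makes $R_H$, hence $d\theta_0|_H$, nondegenerate, so $\theta_0$ is a contact form; the classical contact Darboux theorem yields coordinates $(q^i,p_i,z)$ on $M$ with $\theta_0=dz-\sum_i p_i\,dq^i$. Pulling back to $\Ltilde$ with the fibre coordinate $\mu$ dual to $\lambda$ gives $\widetilde\Theta=\mu(dz-\sum_i p_i\,dq^i)$, and in the still-valid coordinates $(q^i,z,\mu p_i,\mu)$ one computes $\widetilde\omega=d\mu\wedge dz+\sum_i dq^i\wedge d(\mu p_i)$, which is $\sum_{j=1}^k d\chi^j\wedge d\chi^{k+j}$ after the obvious relabelling $\chi$ (one coordinate being $-\mu$). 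Then $\sigma_\chi$ takes values in $S$, and since $q^i,z$ are homogeneous of degree $0$ while $\mu,\mu p_i$ are of degree $1$ we have $h_r^\ast\chi=\mathrm{diag}(\bbI_k,r\bbI_k)\,\chi$, so by the last Proposition of Section~\ref{section:homogeneousintegrability} the chart is homogeneous; hence $S$ is homogeneous integrable.

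The step I expect to be the main obstacle is $(3)\Rightarrow(1)$: beyond invoking the contact Darboux theorem, one must verify by hand that the coordinates it produces assemble into a chart that is \emph{homogeneous} in the technical sense of Section~\ref{section:homogeneousintegrability} — precisely the point that, as the remark there notes, does not survive passage to other structure groups. Everything else reduces to the Section~\ref{section:geometrylinebundle} dictionary, the homotopy identity $d_Di_\bbI+i_\bbI d_D=\id$, symplectic linear algebra on $DL$ relative to the line $\langle\bbI\rangle$, and the Koszul formula.
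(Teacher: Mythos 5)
Your proposal is correct and follows essentially the same route as the paper: the Atiyah-form dictionary of Section \ref{section:geometrylinebundle}, the homotopy identity $[d_D,i_\bbI]=\mathrm{id}$ to recover $\omega=d_D\Theta+\Upsilon$ from $(\theta,\upsilon)$, explicit semi-local homogeneous symplectic frames for the inverse assignment, and for $(3)\Rightarrow(1)$ the contact Darboux theorem yielding precisely the homogeneous chart $(u,x^i,-\widetilde\lambda,\widetilde\lambda p_i)$. The only cosmetic deviation is your proof that $\upsilon|_H-R_H$ is non-degenerate via the observation that the $\omega$-orthogonal of the subbundle of derivations with symbol in $H$ is spanned by $\bbI$, where the paper instead picks an auxiliary connection $\nabla$ on $L$ and argues with $\upsilon+d_\nabla\theta$ --- the same linear algebra in different clothing.
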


\begin{proof}
We first show that the pair $(\theta, \upsilon)$ associated with an $\alpha$-homogeneous $\mathrm{Sp}_{k}$-structure $S$ satisfies the two properties in item \textit{(ii)}. The graded commutator $[d_D,i_\bbI]$ acts like the identity on $\Omega^\bullet(DL,L)$ and it follows that
\[
\omega = d_D i_\bbI \omega + i_\bbI d_D \omega  = d_D \Theta + \Upsilon.
\]
Consequently,
\begin{equation}\label{eq:omega_upsilon_theta}
\omega (\Delta, \Delta') = \Delta (\theta (X_{\Delta'})) - \Delta' (\theta (X_\Delta)) - \theta ([X_{\Delta}, X_{\Delta'}]) + \upsilon (X_{\Delta}, X_{\Delta'}),\hspace{0.5cm}  \forall\; \Delta,\Delta'\in\Gamma(DL).
\end{equation}
Since $\omega$ is non-degenerate and $\bbI$ is nowhere zero, then $\Theta = i_\bbI \omega$, and hence $\theta$, is nowhere zero. Let $H:= \ker \theta \subset TM$ be the induced hyperplane distribution. We want to show that $\upsilon|_H - R_H$ is a non-degenerate, $L$-valued $2$-form on $H$. Recall that the curvature $R_H$ of the distribution $H$ is the $L$-valued $2$-form on $H$, defined by $R_H (X,Y) = [X,Y] \mod \Gamma (H) \in \Gamma (L)$ for all $X,Y \in \Gamma (H)$. Now, pick a connection $\nabla$ on $L$, and note that 
\[
R_H = - d_\nabla \theta |_H
\]
where $d_\nabla$ is the associated connection-differential. So, it is enough to show that the intersection
\[
H \cap \ker \left( \upsilon + d_\nabla \theta \right)
\]
is trivial. The claim will then follow from the fact that, in this case, $\upsilon + d_\nabla \theta $ can only have rank $1$ kernel transversal to $H$, hence $(\upsilon + d_\nabla \theta )|_H = \upsilon|_H - R_H$ must be non-degenerate. So, let $X \in \Gamma (H)$ be such that
\[
\upsilon (X,Y) + d_\nabla \theta(X,Y) = 0
\]
for all $Y \in \mathfrak X (M)$. This means that
\[
0 = \upsilon (X,Y) + \nabla_X (\theta (Y)) - \nabla_Y (\theta (X)) - \theta ([X,Y]) = \omega (\nabla_X, \nabla_Y)
\]
for all $Y$, where we used (\ref{eq:omega_upsilon_theta}). But
\[
\omega (\nabla_X, \bbI) = - \Theta (\nabla_X) = - \theta (X)
\]
vanishes as well. Hence
\[
\omega(\nabla_X, \Delta) = 0
\]
for all $\Delta \in \Gamma (DL)$. As $\omega$ is non-degenerate, we conclude that $X = 0$.

Conversely, let $(\theta, \upsilon)$ be as in \emph{(ii)}, define $\Theta, \Upsilon$ via (\ref{eq:theta,upsilon}) (so that $i_\bbI\Theta=i_\bbI\Upsilon=0)$ and finally put
\[
\omega = d_D \Theta + \Upsilon
\]
(so that $\Theta = i_\bbI \omega$, and $\Upsilon = i_\bbI d_D \omega$). We want to show that $\omega$ is non-degenerate. To do this let $\Delta \in \Gamma (DL)$ be such that
\[
0 = \omega (\Delta, \Delta') = \Delta (\theta (X_{\Delta'})) - \Delta' (\theta (X_\Delta)) - \theta ([X_\Delta, X_{\Delta'}]) + \upsilon (X_\Delta, X_{\Delta'})
\]
for all $\Delta' \in \Gamma (DL)$. In particular,
\[
0 = \omega (\Delta, \bbI) = - \theta (X_\Delta),
\]
showing that $X_\Delta \in \Gamma (H)$. More generally, let us assume that $X_{\Delta'} \in \Gamma (H)$. Then we get
\[
0 = \omega (\Delta, \Delta') = - \theta ([X_\Delta, X_{\Delta'}]) + \upsilon (X_\Delta, X_{\Delta'}) = -R_H (X_\Delta, X_{\Delta'}) + \upsilon (X_\Delta, X_{\Delta'}).
\]
As $X_{\Delta'} \in \Gamma (H)$ is otherwise arbitrary and $\upsilon|_H - R_H$ is non-degenerate, we conclude that $X_\Delta = 0$, so that $\Delta = f \bbI$ for some function $f \in C^\infty (M)$, and
\[
0 = i_{\Delta} \omega = f i_\bbI \omega = f \Theta.
\]
But, from $\theta \neq 0$, it follows that $\Theta \neq 0$ everywhere, hence $f = 0$, i.e.~$\Delta = 0$, showing that $\omega$ is non-degenerate as claimed. This means that, locally, around every point of $M$, we can choose
\begin{enumerate}
\item a basis $\lambda$ of $\Gamma (L)$, and
\item a symplectic frame with components 
\[
\delta_1, \ldots, \delta_{k}, \eta^1, \ldots, \eta^{k}
\]
for the fiber-wise symplectic structure
\[
\lambda^\ast \circ \omega : \wedge^2 DL \to \bbR_M.
\]
\end{enumerate}
where $\lambda^\ast$ is the dual basis of $\lambda$ in $\Gamma (L^\ast)$: $\lambda^\ast (\lambda) = 1$. It is easy to see that the vector fields
\[
\widetilde \delta_1, \ldots, \widetilde \delta_k, \widetilde \lambda{}^{-1} \cdot \widetilde \eta^1, \ldots, \widetilde \lambda{}^{-1} \cdot \widetilde \eta^k 
\]
are the components of a semi-local homogeneous frame $\sigma$ of $\Ltilde$ with the following homogeneity property $\sigma(r\epsilon) = (h_r)_\ast \circ \sigma (\epsilon) \circ A(r)^{-1}$, where $A$ is given in \eqref{eq:SES_Sp_splitting}. All such frames span an $\alpha$-homogeneous $\mathrm{Sp}_{k}$-structure $S$ on $\Ltilde$ with $\alpha$ being the identity, and this construction inverts the assignment $S \mapsto (\theta, \upsilon)$.

For the second part of the statement, that \emph{(1)} implies \emph{(2)} is obvious. Let us show that \emph{(2)} implies \emph{(3)}. So, let $S$ be integrable. Then the associated almost symplectic structure $\widetilde \omega$ is actually a symplectic structure. As the de Rham differential of $\widetilde \omega$ is equal to $\widetilde{d_D \omega}$, we conclude that $\omega$ is $d_D$-closed.
Then $\Upsilon = i_\bbI d_D \omega = 0$, hence $\upsilon = 0$ as well, and $R_H$ is non-degenerate, so that $H$ is a contact structure.

It remains to show that \emph{(3)} implies \emph{(1)}.  To do this, assume that $S$ is such that $H = \ker \theta$ is a contact structure, and $\upsilon = 0$. Choose Darboux coordinates $(x^i, u, p_i)$ on $M$, so that
\[
\theta = (du - p_i dx^i) \otimes \lambda
\]
where $\lambda = \theta \left( \frac{\partial}{\partial u} \right) \neq 0$ everywhere. It is then easy to see that
\[
\widetilde \omega = d \left( \widetilde \lambda (du -p_i dx^i)\right) = - du \wedge d \widetilde \lambda + dx^i \wedge d (\widetilde \lambda p_i).
\]
This shows that $\chi = (u, x^i, - \widetilde \lambda, \widetilde \lambda p_i)$ are homogeneous coordinates such that $\sigma_\chi$ takes values in $S$. 
\end{proof}

Theorem \ref{theor:cont} shows that, given an integrable (hence homogeneous integrable) $\alpha$-homogeneous $\mathrm{Sp}_{k}$-structure on $\Ltilde$, with $\alpha$ the identity map, we get a contact structure $H$ on $M$ together with an isomorphism $L \cong TM/H$, and vice versa. Thus, contact structures indeed fit in the framework of integrable homogeneous $G$-structures, and this also suggests that (at least from the point of view of $G$-structures) the correct notion of an \textit{almost structure} in contact geometry is a pair $(\vartheta, \upsilon)$ as in the statement of the theorem. 

\begin{remark}
According to Theorem \ref{theor:cont}, it would be natural to call a pair $(H, \upsilon)$ as in the statement an \emph{almost contact structure}. Unfortunately, this terminology is already used in the literature in at least two other situations. The community working on \textit{metric contact geometry} often uses the term \emph{almost contact manifold} to refer to the odd dimensional analogue of an almost complex manifold and/or its metric version (see \cite{Blair}, \cite[Appendix]{Schnitzer} and Section \ref{SecComplex} below). The community working on \textit{contact topology} uses the term \emph{almost contact structure} to denote a pair $(H, \beta)$ (which, in this remark, we will refer to as an \emph{almost contact pair}) consisting of a hyperplane distribution $H \subset M$ and a non-degenerate 2-form $\beta : \wedge^2 H \to L$ on $H$ with values in the normal line bundle $L := TM/H$ (see \cite{Eliashberg}).  The reason for this terminology is that, when $\beta = R_H$, then $H$ is a contact structure. While the latter is very closely related to our pair $(H, \upsilon)$, there is a subtle difference between the two. Namely, if we start with a pair $(H, \upsilon)$ as in Theorem \ref{theor:cont}, then indeed $(H, R_H - \upsilon|_H)$ is an almost contact pair in the above sense. However, if $(H, \beta)$ is an almost contact pair, then $R_H - \beta$ can be extended to a $2$-form $\upsilon \in \Omega^2 (M, L)$ such that $(H, \upsilon)$ is as in Theorem \ref{theor:cont}, but not in a canonical way (one needs to make a choice of a trivialization $L \cong \mathbb R_M$).
\end{remark}

\section{Other Examples}
\label{section:otherexamples}

\subsection{The symplectic group again}

Let $n > 0$ be an odd integer, and set $k=(n+1)/2$. 
%
Let us consider homogeneous $\mathrm{Sp}_{k}$-structures $S$ whose degree $\alpha$ is trivial, i.e.  $\alpha : \bbR^\times \to N(\mathrm{Sp}_{k})/\mathrm{Sp}_{k} \cong \bbR^\times$, with $\alpha (r) = 1$ for all $r \in \bbR^\times$. As we will see, these types of $G$-structures are closely related to cosymplectic structures and arise naturally in $b$-symplectic geometry. 

Recall that a \emph{$b$-manifold} is a pair $(N,M)$ consisting of a manifold $N$ and a closed hypersurface $M \subset N$ (see, e.g.,~\cite{Guillemin}). The \emph{$b$-tangent bundle} of $(N, M)$ is the vector bundle $T^b N$ over $N$ whose sections are vector fields on $N$ that are tangent to $M$. The $b$-tangent bundle has the structure of a Lie algebroid, where the Lie bracket is given by the commutator of vector fields (tangent to $M$) and the anchor map is the identity map at the level of sections. The (point-wise) restriction $T^b N |_M \to M$ is a subalgebroid that fits in the following short exact sequence of vector bundles over $M$:
\begin{equation}\label{eq:SES}
0 \to K \to T^b N |_M \to TM \to 0.
\end{equation}
The projection $T^b N |_M \to TM$ maps (the point-wise restriction to $M$ of) a section of $T^b N$ to its restriction to $M$ as a vector field, and it is well-defined by the definition of $T^b N$. The kernel $K$ admits a canonical nowhere-zero section $I$, which, in a coordinate chart $(t, z^a)$ of $N$ adapted to $M$ (i.e. for which $M$ is the zero set of $t$), is given by $I = t \frac{\partial}{\partial t}$. 

Now, let $\nu := TN|_M /TM \to M$ be the normal bundle to $M$, and let $L := \nu^\ast \to M$ be the conormal bundle. In particular, $L$ is a line bundle, and it is not hard to see that there is an isomorphism of Lie algebroids $T^b N|_M \cong DL$ which maps the point-wise restriction to $M$ of a section $X$ of $T^b N$ to the derivation $\Delta_X$ of $L$ defined as follows: any $\lambda \in \Gamma (L)$ is the point-wise restriction to $M$ of a $1$-form $\eta$ on $N$ whose pull-back to $M$ vanishes, and 
\[
\Delta_X \lambda := (\mathcal L_X \eta)|_M.
\]
Under the isomorphism $T^b N|_M \cong DL$, $I$ becomes the identity derivation $\mathbb I$, and, hence, the short exact sequence (\ref{eq:SES}) becomes
\[
0 \to \bbR_M \to DL \to TM \to 0.
\]

A \emph{\emph{$b$}-symplectic structure} on a $b$-manifold $(N, M)$ is a symplectic structure on the $b$-tangent bundle. $b$-symplectic structures are important in Poisson geometry since they provide particularly nice instances of Poisson manifolds, namely Poisson manifolds $(N, \pi)$ whose Poisson tensor $\pi$ is everywhere non-degenerate except for a hypersurface $M \subset N$, where $\pi$ satisfies a suitable transversality condition. Given a $b$-symplectic structure on $(N,M)$, the restriction of the symplectic form to $T^b N|_M$ can be seen as a symplectic structure on the Atiyah algebroid $DL$ under the isomorphism $T^b N|_M \cong DL$:
\[
\omega : \wedge^2 DL \to \bbR_M.
\]
Such symplectic structures, as Theorem \ref{theor:sympl_triv} below shows, are examples of $\alpha$-homogeneous $\mathrm{Sp}_{k}$-structures with $\alpha = 1$. Let us explain how the symplectic form is constructed from such a structure. 

Let $L \to M$ be a line bundle with $\dim M = n$ odd, and set $k = (n+1)/2$. Let $S$ be an $\alpha$-homogeneous $\mathrm{Sp}_{k}$-structure on $\Ltilde$, with $\alpha = 1$. By Proposition \ref{prop:any_A}, around any point in $\Ltilde$, there is a saturated open neighborhood $\Utilde$ and a homogeneous section $\sigma$ of $S|_{\Utilde}$ such that $A_\sigma = 1$. The components $X_1, \ldots, X_{k}, Y^1, \ldots, Y^{k}$ of $\sigma$ satisfy the homogeneity conditions
\[
(h_r)_\ast (X_i) = X_i \hspace{0.5cm} \text{and} \hspace{0.5cm} (h_r)_\ast (Y^i) = Y^i \hspace{1cm} \forall \; r \in \bbR^\times,\;  i = 1,\ldots, k.
\]
Denoting the components of the dual coframe by $\xi^1, \ldots, \xi^{k}, \eta_1, \ldots, \eta_{k} \in \Omega^1 (\Utilde)$, we define an almost symplectic structure $\widetilde \omega$ on $\Ltilde$ by setting
\[
\widetilde \omega|_{\Utilde} = \xi^1 \wedge \eta_1 + \cdots + \xi^{k} \wedge \eta_{k}, 
\]
for any saturated open neighborhood $\Utilde$ as above. It follows that $\widetilde \omega$ satisfies the homogeneity property 
\[
h^\ast_r (\widetilde \omega) = \widetilde \omega, \quad \forall\; r \in \bbR^\times.
\]
Equivalently, $\widetilde \omega$ maps homogeneous vector fields of degree $0$ to homogeneous functions of degree $0$, i.e.~fiber-wise constant functions, and hence it defines a non-degenerate $2$-form
$
\omega : \wedge^2 DL \to \bbR_M.
$

\begin{theorem}\label{theor:sympl_triv}
Let $L \to M$ be a line bundle, with $n = \dim M$ odd, and set $k = (n+1)/2$. The assignment $S \mapsto \omega$ described above defines a one-to-one correspondence between $\alpha$-homogeneous $\mathrm{Sp}_{k}$-structures $S$ on $\Ltilde$, with $\alpha=1$, and non-degenerate $2$-forms
\[
\omega : \wedge^2 DL \to \bbR_M.
\]
Furthermore, the following conditions are equivalent:
\begin{enumerate}
\item $S$ is homogeneous integrable,
\item $S$ is integrable,
\item $\omega$ is a cocycle in the de Rham complex of $DL$ (with trivial coefficients).
\end{enumerate}
\end{theorem}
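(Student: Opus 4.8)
The plan is to mirror the structure of the proof of Theorem~\ref{theor:cont}, simplifying at the points where triviality of $\alpha$ makes things easier. First I would establish the bijection. Given an $\alpha$-homogeneous $\mathrm{Sp}_k$-structure $S$ with $\alpha=1$, the construction preceding the theorem produces a non-degenerate Atiyah $2$-form $\omega:\wedge^2 DL\to\bbR_M$; one checks this does not depend on the choice of semi-local homogeneous section by a computation analogous to Proposition~\ref{prop:sigma,sigma'}. Conversely, given a non-degenerate $\omega:\wedge^2 DL\to\bbR_M$, one picks locally a basis $\lambda$ of $\Gamma(L)$ and a symplectic frame $\delta_1,\dots,\delta_k,\eta^1,\dots,\eta^k$ for $\omega$ (viewed now with genuine $\bbR_M$-coefficients, no twisting by $L$); the vector fields $\widetilde\delta_1,\dots,\widetilde\delta_k,\widetilde\eta{}^1,\dots,\widetilde\eta{}^k$ on $\Ltilde$ (note: \emph{no} factor of $\widetilde\lambda{}^{-1}$ this time, since $\omega$ has trivial coefficients and $\widetilde\eta{}^i$ is already homogeneous of degree $0$) form a semi-local homogeneous frame with $A_\sigma=1$. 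These frames span a homogeneous $\mathrm{Sp}_k$-structure with trivial degree, and the two constructions are mutually inverse; well-definedness of the $G$-structure follows because $\mathrm{Sp}_k$ is exactly the group of frame changes preserving the standard symplectic form on $\bbR^{2k}$.

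Next I would treat the equivalence of the three conditions on integrability. That $(1)\Rightarrow(2)$ is immediate since a homogeneous coordinate chart is in particular a coordinate chart. For $(2)\Rightarrow(3)$: if $S$ is integrable, then $\widetilde\omega$ is an honest symplectic form on $\Ltilde$, so $d\widetilde\omega=0$; since $\widetilde{d_D\omega}=d\widetilde\omega$ under the correspondence of Section~\ref{section:geometrylinebundle} between Atiyah forms and homogeneous forms (here degree $0$, since $\omega$ has trivial coefficients), $d_D\omega=0$, i.e.\ $\omega$ is a cocycle. The interesting implication is $(3)\Rightarrow(1)$.

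For $(3)\Rightarrow(1)$, I would argue locally: assume $d_D\omega=0$. Then $\widetilde\omega$ is a closed, non-degenerate, homogeneous-of-degree-$0$ $2$-form on $\Ltilde$. The Euler vector field $\calE$ on $\Ltilde$ (corresponding to $\bbI\in\Gamma(DL)$) satisfies $i_\calE\widetilde\omega = \widetilde{i_\bbI\omega}$, which is a homogeneous-of-degree-$0$ closed $1$-form descending to a closed $1$-form on $M$ via $p$ (closed because $i_\bbI\omega$ is $d_D$-closed, using $d_D\omega=0$ and $[d_D,i_\bbI]=\id$ which gives $i_\bbI\omega=\omega - d_D i_\bbI\omega$, hence $d_D i_\bbI\omega = d_D\omega = 0$). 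One then wants Darboux-type coordinates adapted to the homogeneity. Concretely: since $i_\bbI\omega$ is closed on $M$, locally write $i_\bbI\omega = d_D f$ for a function $f$ on $M$; then $\omega - d_D(f)$... more directly, one can produce $\mathbb{R}^\times$-invariant Darboux coordinates on $\Ltilde$. Fix a local trivialization $L\cong\bbR_M$, so $\Ltilde\cong M\times\bbR^\times$ with coordinate $\mu$ on the fiber; since $\widetilde\omega$ is degree-$0$ it is $(h_r)$-invariant, and by the analysis in the Proposition characterizing homogeneous charts it suffices to find Darboux coordinates $\chi$ with $h_r^\ast\chi = \chi + b(r)$, i.e.\ coordinates among which the fiber direction enters only through $t:=\log|\mu|$ (on $\bbR^\times_+$) and the others are pulled back from $M$. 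Writing $\widetilde\omega = dt\wedge\beta + \omega_0$ with $\beta,\omega_0$ basic and closed (this decomposition exists because $i_{\partial_t}\widetilde\omega$ and $\mathcal L_{\partial_t}\widetilde\omega=0$ are basic), non-degeneracy forces $\beta$ to be a nowhere-zero closed $1$-form on $M$ and $\omega_0$ to restrict non-degenerately to $\ker\beta$; locally $\beta = dx^n$ for a coordinate $x^n$, and applying the classical Darboux theorem with parameter to $\omega_0$ on slices $\{x^n=\text{const}\}$ yields coordinates $x^1,\dots,x^{n-1}$ on $M$ with $\omega_0 = \sum dp_i\wedge dx^i$ in suitable form; then $(x^1,\dots,x^n, t, p_1,\dots,p_{n-1})$ — after relabeling into $k$ conjugate pairs — is a homogeneous Darboux chart, so $\sigma_\chi$ takes values in $S$. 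The main obstacle is getting this normal form cleanly: ensuring the splitting $\widetilde\omega = dt\wedge\beta+\omega_0$ is compatible with a single coordinate system and that the parametrized Darboux step produces coordinates that are genuinely basic (pulled back from $M$) rather than depending on $t$; this is where one must be careful, and it is essentially a relative/equivariant Darboux theorem for the degree-$0$ (i.e.\ $\mathbb{R}^\times$-invariant) symplectic form $\widetilde\omega$, equivalently a Darboux theorem for the $DL$-symplectic form $\omega$ with trivial coefficients, for which one can invoke the Moser trick adapted to Lie algebroids.
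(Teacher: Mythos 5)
Your bijection argument and the implications \emph{(1)}$\Rightarrow$\emph{(2)}$\Rightarrow$\emph{(3)} coincide with the paper's proof (including the observation that no factor $\widetilde\lambda{}^{-1}$ is needed when the coefficients are trivial). The genuine issue is \emph{(3)}$\Rightarrow$\emph{(1)}, which is the only nontrivial step, and there your argument has a gap. The slice-wise (parametrized) Darboux theorem applied to $\omega_0$ on the level sets of $x^n$ only normalizes the \emph{restriction} of $\omega_0$ to each slice; as a $2$-form on $M$ one gets $\omega_0=\sum dq^i\wedge dp_i + dx^n\wedge\gamma$ with an unknown $1$-form $\gamma$, and the cross term $dx^n\wedge\gamma$ is exactly what must be absorbed (by correcting $t$ or the momenta) before $(q^i,p_i,x^n,t)$ is a Darboux chart; your final coordinate list also over-counts (it has $2n$ entries on an $(n{+}1)$-dimensional manifold). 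You acknowledge this and defer to ``a relative/equivariant Darboux theorem'' proved by a Moser trick, but that theorem \emph{is} essentially the statement to be proved, so as written the implication is not established. (A smaller slip: the identity $[d_D,i_\bbI]=\mathrm{id}$ holds in the $L$-valued Atiyah complex used for Theorem \ref{theor:cont}; in the trivial-coefficient complex relevant here one has $[d_D,i_\bbI]=\mathcal L_\bbI=0$, which is what actually gives $d_D\, i_\bbI\omega=0$ — your conclusion is right but the quoted identity is the wrong one.)

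The paper closes this step with one classical result instead of an equivariant normal form built by hand: since $h_r^\ast\widetilde\omega=\widetilde\omega$, the Euler vector field $\calE$ is a nowhere-vanishing symplectic vector field for the symplectic form $\widetilde\omega$, so the Carath\'eodory (Carath\'eodory--Jacobi--Lie) theorem yields, around every point of $\Ltilde$, a Darboux chart $(V,\chi)$ with $\frac{\partial}{\partial\chi^1}=\calE$; the relations $\left[\calE,\frac{\partial}{\partial\chi^i}\right]=0$ then integrate to $h_r^\ast\chi=\chi+b(r)$ near $r=1$, so $(V,\chi)$ is a homogeneous chart with trivial $A$ and $\sigma_\chi$ takes values in $S$. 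If you want to keep your invariant-splitting route, you would need to actually carry out the equivariant Moser/Darboux argument (or absorb the cross term explicitly); alternatively, replacing that portion of your sketch by the Carath\'eodory theorem gives the two-line conclusion.
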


\begin{proof}
Begin with a non-degenerate $2$-form $\omega : \wedge^2 DL \to \bbR_M$. Locally, around every point of $M$, we can choose a symplectic frame of $DL$, with components
\[
\delta_1, \ldots, \delta_{k}, \zeta^1, \ldots, \zeta^{k}
\]
and
\[
\widetilde\delta_1, \ldots, \widetilde\delta_{k}, \widetilde\zeta^1, \ldots, \widetilde\zeta^{k} \in \mathfrak X (\Ltilde)
\]
are the components of a semi-local homogeneous frame $\sigma$ on $\Ltilde$ with the following homogeneity property: $\sigma(r\epsilon) = (h_r)_\ast \circ \sigma (\epsilon)$. All such frames span an $\alpha$-homogeneous $\mathrm{Sp}_{k}$-structure $S \subset \mathrm{Fr}(\Ltilde)$ with $\alpha = 1$, and this construction inverts the correspondence $S \mapsto \omega$.

For the second part of the statement, that \emph{(1)} implies \emph{(2)} is obvious.
Let us show that \emph{(2)} implies \emph{(3)}.
So, let $S$ be integrable. Then the associated almost symplectic structure $\widetilde\omega$ is actually a symplectic structure. Similarly as in the previous section the de Rham differential of $\widetilde \omega$ is equal to $\widetilde{d_D \omega}$ (where $d_D$ is the de Rham differential of the Atiyah algebroid $DL$). We conclude that $d_D \omega = 0$. It remains to show that \emph{(3)} implies \emph{(1)}.
To do this, assume that $S$ is such that $d_D \omega = 0$. Then $d \widetilde \omega = 0$, i.e.~$\widetilde \omega$ is a symplectic structure.
Additionally, it follows from the homogeneity condition $h_r^\ast (\widetilde \omega) = \widetilde \omega$, that the Euler vector field $\calE \in \mathfrak X (\Ltilde)$ is an infinitesimal symplectomorphism.
The Carath\'eodory Theorem then states that, around every point in $\Ltilde$, there is a Darboux chart $(V, \chi)$ for $\widetilde{\omega}$ such that $\frac{\partial}{\partial \chi^1} = \calE$.
Integrating the commutation relations 
\[
\left[\calE, \frac{\partial}{\partial \chi^i}\right] = \left[\frac{\partial}{\partial \chi^1}, \frac{\partial}{\partial \chi^i}\right]= 0, \quad \forall i = 2, \ldots, n+1,
\]
 we easily see that $(V, \chi)$ is a homogeneous chart (with $A : \bbR^\times \to \mathrm{GL}_{n+1}(\bbR)$ being the trivial homomorphism).
\end{proof}

\begin{remark}
A \emph{cosymplectic structure} \cite{CDYcosymplectic} on a $(2k-1)$-dimensional manifold $M$ is a pair $(\Omega, \eta)$ consisting of a $2$-form and a $1$-form on $M$ such that $\eta \wedge \Omega^{k-1}$ is a volume form and $d\Omega = d\eta = 0$. Intuitively, $\alpha$-homogeneous $\mathrm{Sp}_k$-structures with $\alpha = 1$ are ``intrinsic versions'' of cosymplectic structures, in the same way that contact structures are ``intrinsic versions'' of contact forms, where the latter are obtained from the former when a certain line bundle is equipped with a preferred trivialization. Namely, let $L \to M$ be a line bundle over $M$, and let $\omega : \wedge^2 DL \to \mathbb R_M$ be a closed $2$-form. It is easy to see that, when $L = \mathbb R_M$ is the trivial line bundle, then $\omega$ identifies canonically with a pair $(\Omega, \eta)$ consisting of an honest $2$-form $\Omega \in \Omega^2 (M)$ and an honest $1$-form $\eta \in \Omega^1 (M)$ on $M$. Now, $\omega$ being non-degenerate implies that $\eta \wedge \Omega^{k-1}$ is a volume form, and $\omega$ being closed implies that $d\Omega = d\eta = 0$ (and vice-versa). In other words $(\Omega,\eta)$ is a cosymplectic structure. For instance, when $L \to M$ is the conormal bundle to the distinguished hypersurface of a $b$-symplectic manifold $N$, $\omega$ the restriction of the symplectic structure to $T^b N|_M \cong DL$ as above, and $M$ is co-orientable (and hence $L$ trivializable), then $\omega$ is responsible for the cosymplectic structure on $M$ associated to a trivialization of $L$ \cite[Proposition 10]{Guillemin}. 
\end{remark}

\subsection{The complex group}\label{SecComplex}

Let $n > 0$ be an odd integer, and set $k = (n+1)/2$. Let us now consider homogeneous $\mathrm{GL}_{k}(\bbC)$-structures, where $\mathrm{GL}_{k}(\bbC)$ is the group of invertible $k\times k$ complex matrices embedded as the subgroup of $\mathrm{GL}_{n+1}(\bbR)$ consisting of matrices $A$ satisfying $A^{-1}JA = J$, with $J$ given by (\ref{eq:i_matrix}). 

\begin{lemma}\label{lem:N(GL(C))}
The normalizer $N(\mathrm{GL}_{k}(\bbC))$ of the complex general linear group $\mathrm{GL}_k (\bbC)$ in $\mathrm{GL}_{n+1} (\bbR)$ fits in the split short exact sequence of Lie group homomorphisms
\begin{equation}
	\label{eq:SES_compl}
	\begin{tikzcd}
	1 \arrow[r]&\mathrm{GL}_{k}(\bbC)\arrow[r]&N(\mathrm{GL}_{k}(\bbC))\arrow[r, 
	 "p"]&\bbZ_2\arrow[r]\arrow[l, bend right=-40, 
	 "\Sigma"]&1
	\end{tikzcd},
	\end{equation}
with $p : N(\mathrm{GL}_k (\bbC)) \to \bbZ_2$ defined by $(-)^{p(B)} \bbI_{n+1} = J^{-1}B^{-1}J B$. Furthermore, a splitting $\Sigma$ is given by $\Sigma (\overline 0) = \bbI_{n+1}$ and
\begin{equation}
\Sigma (\overline 1) = V :=
\begin{pmatrix}
	\bbO&\bbI\\
	\bbI&\bbO
	\end{pmatrix},
\end{equation}
and therefore $N(\mathrm{GL}_k (\bbC))$ decomposes as the semidirect product of $\mathrm{GL}_k (\bbC)$ and the two element subgroup $\{\bbI_{n+1}, V \}$.
\end{lemma}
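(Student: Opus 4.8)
The plan is to identify $N(\mathrm{GL}_k(\bbC))$ with the set of real matrices conjugating $J$ to $\pm J$, and then read the split exact sequence \eqref{eq:SES_compl} off from this. Recall that under the embedding above $\mathrm{GL}_k(\bbC) = \{A \in \mathrm{GL}_{n+1}(\bbR) : AJ = JA\}$ and, from \eqref{eq:i_matrix}, $J^2 = -\bbI_{n+1}$. Set
\[
N' := \{\, B \in \mathrm{GL}_{n+1}(\bbR) \ :\ B^{-1}JB \in \{J, -J\}\,\},
\]
so that $(-1)^{p(B)}\bbI_{n+1} = J^{-1}B^{-1}JB$ unambiguously defines a map $p : N' \to \bbZ_2$. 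Note $N'$ is a subgroup — the relation $B^{-1}JB = \eps J$, $\eps \in \{\pm 1\}$, is stable under products and inverses — and is closed in $\mathrm{GL}_{n+1}(\bbR)$, being the preimage of $\{J,-J\}$ under $B \mapsto B^{-1}JB$; hence $N'$ is a Lie subgroup and all maps below are automatically smooth. The inclusion $N' \subseteq N(\mathrm{GL}_k(\bbC))$ is easy: if $B^{-1}JB = \eps J$ (equivalently $BJB^{-1} = \eps J$) and $A$ commutes with $J$, then $(BAB^{-1})^{-1}J(BAB^{-1}) = BA^{-1}(B^{-1}JB)AB^{-1} = \eps\, B(A^{-1}JA)B^{-1} = \eps\, BJB^{-1} = J$, so $BAB^{-1} \in \mathrm{GL}_k(\bbC)$.

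The reverse inclusion $N(\mathrm{GL}_k(\bbC)) \subseteq N'$ is the crux. Here one uses that conjugation by $B \in N(\mathrm{GL}_k(\bbC))$ is an automorphism of the group $\mathrm{GL}_k(\bbC)$ and therefore fixes its center setwise. The center of $\mathrm{GL}_k(\bbC)$ is the group of nonzero complex scalar matrices, which under the real embedding is exactly $W \setminus \{0\}$, where $W := \bbR\bbI_{n+1} + \bbR J \subset \mathfrak{gl}_{n+1}(\bbR)$ is a $2$-plane. Since $J$ (being $i$ times the identity) lies in this center, $BJB^{-1} \in W$, say $BJB^{-1} = a\bbI_{n+1} + bJ$ with $a,b \in \bbR$. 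Squaring and using $J^2 = -\bbI_{n+1}$ gives $-\bbI_{n+1} = (a^2 - b^2)\bbI_{n+1} + 2abJ$, whence $ab = 0$ and $a^2 - b^2 = -1$; as $a$ is real this forces $a = 0$, $b = \pm 1$, i.e. $B \in N'$.

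With $N(\mathrm{GL}_k(\bbC)) = N'$ in hand, the sequence is immediate. The map $p$ is a homomorphism because $(BC)^{-1}J(BC) = C^{-1}(B^{-1}JB)C$ scales $J$ by the product of the two signs, and $\ker p = \{B : BJ = JB\} = \mathrm{GL}_k(\bbC)$. A $2\times 2$-block computation gives $VJ = -JV$ and $V^2 = \bbI_{n+1}$ for the matrix $V$ in the statement, so $p(V) = \overline 1$ (surjectivity) and $\Sigma(\overline 0) := \bbI_{n+1}$, $\Sigma(\overline 1) := V$ is a homomorphic splitting; since $\ker p = \mathrm{GL}_k(\bbC)$ while $\{\bbI_{n+1}, V\}$ meets $\mathrm{GL}_k(\bbC)$ only in $\bbI_{n+1}$ (as $VJ \ne JV$), this yields $N(\mathrm{GL}_k(\bbC)) = \mathrm{GL}_k(\bbC) \rtimes \{\bbI_{n+1}, V\}$. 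The only genuinely nontrivial ingredient is in the second paragraph: the identification of $Z(\mathrm{GL}_k(\bbC))$ with $W \setminus \{0\}$ and the observation that an a priori merely abstract automorphism of the group must preserve it; once that is granted, the identity $BJB^{-1} = \pm J$ and the remaining bookkeeping are routine.
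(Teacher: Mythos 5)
Your proof is correct, and it follows the same overall strategy as the paper's: reduce everything to showing that $BJB^{-1}$ (equivalently $J^{-1}B^{-1}JB$) lies in the set of ``complex scalar'' matrices $a\bbI_{n+1}+bJ$, then use $J^2=-\bbI_{n+1}$ and a squaring argument to force $BJB^{-1}=\pm J$, after which the homomorphism $p$, its kernel, the splitting by $V$ (via $VJ=-JV$, $V^2=\bbI_{n+1}$), and the semidirect-product statement are routine. The one point where you diverge is in how you land in the complex scalars: you invoke the abstract fact that conjugation by a normalizing element is an automorphism of $\mathrm{GL}_k(\bbC)$ and hence preserves its center $\bbC^\times\bbI$ setwise, whereas the paper characterizes $B\in N(\mathrm{GL}_k(\bbC))$ by the condition that $J^{-1}B^{-1}JB$ lie in the centralizer of $\mathrm{GL}_k(\bbC)$ inside $\mathrm{GL}_{n+1}(\bbR)$ and computes that centralizer directly with block matrices. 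Your variant trades that small block-matrix computation for the standard fact $Z(\mathrm{GL}_k(\bbC))=\bbC^\times\bbI$, which is a fair exchange and at the same level of rigor as the paper's sketched centralizer computation; you are also somewhat more explicit than the paper about the easy inclusion $N'\subseteq N(\mathrm{GL}_k(\bbC))$ and the final exact-sequence bookkeeping.
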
 

\begin{proof}
We use a similar strategy as that of~\cite{Sirola}. Begin noticing that the matrix $V$ in the statement is indeed in the normalizer. Now let $B \in \mathrm{GL}_{n+1}(\bbR)$. It is easy to see that $B$ is in $N(\mathrm{GL}_k (\bbC))$ iff $J^{-1}B^{-1}JB$ is in the centralizer of $\mathrm{GL}_k (\bbC)$. In its turn, the centralizer consists of the scalar multiplications of vectors in $\bbR^{n+1} = \bbC^k$ by invertible complex numbers, i.e.~matrices $C$ of the form
\[
C = a \bbI_{n+1} + bJ, \quad a+ib \in \bbC \setminus \{0\},
\]
as one can easily show using that elements of the centralizer commute with matrices of the form
\[
\begin{pmatrix}
	U&\bbO\\\bbO&U
	\end{pmatrix}
	\quad \text{and} \quad
	 \begin{pmatrix}
	\bbO&U\\
	-U&\bbO
	\end{pmatrix}
\]
with $U\in\mathrm{GL}_{k}(\bbR)$. A direct computation then reveals that $J^{-1}B^{-1}JB$ is $\pm \bbI_{n+1}$. As $J^{-1}V^{-1}JV = -\bbI_{n+1}$ this concludes the proof.
\end{proof}

The surjective homomorphism $p$ in Lemma~\ref{lem:N(GL(C))} induces an isomorphism of groups $N(\mathrm{GL}_{k}(\bbC))/\mathrm{GL}_k (\bbC) \cong \bbZ_2$. There are, therefore, only two Lie group homomorphisms $\alpha : \bbR^\times \to N(\mathrm{GL}_{k}(\bbC))/\mathrm{GL}_k (\bbC) $, the trivial one and the sign. We restrict our attention to the trivial case. The other case is similar and is left to the reader.

Let $L \to M$ be a line bundle with $\dim M = n$, and let $S$ be an $\alpha$-homogeneous $\mathrm{GL}_k(\bbC)$-structures on $\Ltilde$ with $\alpha=1$ (the trivial map). As in the previous example, around any point in $\Ltilde$ there is a saturated open neighborhood $\Utilde$ and a homogeneous section of $S|_{\Utilde}$ such that $A_\sigma = 1$. The components $X_1, \ldots, X_{k}, Y_1, \ldots, Y_{k}$ of $\sigma$ satisfy
\[
(h_r)_\ast (X_i) = X_i \hspace{0.5cm} \text{and} \hspace{0.5cm} (h_r)_\ast (Y^i) = Y^i \hspace{1cm} \forall \; r \in \bbR^\times,\;  i = 1,\ldots, k.
\]
Denote by $\xi^1, \ldots, \xi^{k}, \eta^1, \ldots, \eta^{k}$ the components of the dual coframe, and define a complex structure $\widetilde j$ by setting
\[
{\widetilde j }\, |_{\Utilde}= \xi^1 \otimes Y_1 + \cdots + \xi^{k} \otimes Y_{k} - \eta^1 \otimes X_1 - \cdots - \eta^{k} \otimes X_{k}.
\]
Clearly,
\[
h^\ast_r (\widetilde j) = \widetilde j, \quad \forall \; r \in \bbR^\times.
\]
Equivalently, $\widetilde j$ maps homogeneous vector fields of degree $0$ to themselves, and hence it defines a fiber-wise complex structure 
$
K : DL \to DL.
$

\begin{theorem}
\label{theor:complex}
Let $L \to M$ be a line bundle, with $n = \dim M$ odd, and set $k = (n+1)/2$. The assignment $S \mapsto K$ defines a one-to-one correspondence between 
\begin{enumerate}
	\item[(i)]$\alpha$-homogeneous $\mathrm{GL}_{k}(\bbC)$-structures on $\Ltilde$, with $\alpha = 1$,
	\item[(ii)] fiber-wise complex structures $K$ on $DL$. 
\end{enumerate}
Furthermore, the following conditions are equivalent:
\begin{enumerate}
\item $S$ is homogeneous integrable,
\item $S$ is integrable,
\item $K$ is a complex structure, in the sense that the (Lie-algebroid) Nijenhuis torsion of $K$ vanishes identically.
\end{enumerate}
\end{theorem}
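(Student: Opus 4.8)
The plan is to follow the template set by the proofs of Theorems \ref{theor:cont} and \ref{theor:sympl_triv}, establishing the one-to-one correspondence first and then the equivalence of the three integrability conditions. For the correspondence $S \mapsto K$, I would first check that $K$ is well-defined, i.e.\ independent of the choice of semi-local homogeneous section $\sigma$: any two such sections (with $A_\sigma = A_{\sigma'} = 1$) differ by a smooth $\mathrm{GL}_k(\bbC)$-valued function, and since $\mathrm{GL}_k(\bbC)$ is precisely the stabilizer of the standard complex structure on $\bbR^{n+1} = \bbC^k$, the endomorphism $\widetilde j$ (and hence $K$) is unchanged. One also checks $K^2 = -\id$ from $\widetilde j{}^2 = -\id$, and that $K$ is fiber-wise $\bbR$-linear as an Atiyah-bundle endomorphism. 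Conversely, given a fiber-wise complex structure $K : DL \to DL$, one chooses locally a \emph{complex frame} for $K$, i.e.\ a frame $\delta_1, \ldots, \delta_k, \zeta_1, \ldots, \zeta_k$ of $\Gamma(DL)$ with $K\delta_i = \zeta_i$, $K\zeta_i = -\delta_i$; the corresponding homogeneous vector fields $\widetilde\delta_1, \ldots, \widetilde\delta_k, \widetilde\zeta_1, \ldots, \widetilde\zeta_k$ on $\Ltilde$ form a semi-local homogeneous frame $\sigma$ with $A_\sigma = 1$, and the collection of all such frames spans an $\alpha$-homogeneous $\mathrm{GL}_k(\bbC)$-structure $S$ with $\alpha = 1$. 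A routine verification shows these assignments are mutually inverse.

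For the integrability equivalences, $(1) \Rightarrow (2)$ is immediate as before. For $(2) \Rightarrow (3)$: if $S$ is integrable as an ordinary $\mathrm{GL}_k(\bbC)$-structure on $\Ltilde$, then the associated almost complex structure $\widetilde j$ is integrable, i.e.\ its Nijenhuis torsion $N_{\widetilde j}$ vanishes (Newlander--Nirenberg). Since $\widetilde j$ is $h$-invariant and corresponds to $K$ under the degree-$0$ homogeneity dictionary of Section \ref{section:geometrylinebundle}, the tensor $N_{\widetilde j}$ is again $h$-invariant and one identifies it with the Atiyah-algebroid Nijenhuis torsion of $K$ via the same correspondence (the Lie algebroid bracket on $DL$ matching the Lie bracket of degree-$0$ vector fields on $\Ltilde$). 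Hence $N_K = 0$. For $(3) \Rightarrow (1)$: assuming $N_K = 0$, I want to produce, around each point of $\Ltilde$, a \emph{homogeneous} holomorphic chart. The idea is to run a homogeneous/equivariant version of the Newlander--Nirenberg argument: one first observes that $N_K = 0$ implies $N_{\widetilde j} = 0$, so $\widetilde j$ is integrable and $\Ltilde$ carries honest holomorphic charts; then, as in the proof of Theorem \ref{theor:sympl_triv} where the Carath\'eodory theorem was used to straighten the Euler vector field within a Darboux chart, here one needs an analogous statement: around each point there is a holomorphic chart in which the Euler vector field $\calE$ takes a standard form compatible with the $\bbR^\times$-action. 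Concretely, since $\calE$ generates the $\bbR^\times_+$-action by $h$ and $h$ preserves $\widetilde j$, $\calE$ is a real-holomorphic vector field; one can arrange a chart $(V,\chi)$ in which $\frac{\partial}{\partial\chi^1} = \calE$, and then integrating the commutation relations $[\calE, \partial/\partial\chi^i] = 0$ as in Theorem \ref{theor:sympl_triv} shows $(V,\chi)$ is a homogeneous chart whose induced frame $\sigma_\chi$ takes values in $S$ (with $A_\sigma$ trivial).

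The main obstacle I expect is precisely the last step, $(3) \Rightarrow (1)$: finding a holomorphic chart that simultaneously straightens the Euler vector field. For symplectic geometry this was handed to us by the Carath\'eodory theorem; in the complex setting one needs the complex-analytic analogue that a nowhere-zero (real-)holomorphic vector field whose flow preserves the complex structure can be locally straightened to $\partial/\partial\chi^1$ in holomorphic coordinates. This should follow from the holomorphic flow-box theorem applied to a suitable complexification, but some care is required because $\calE$ generates only the $\bbR^\times_+$-part of the action and one must also accommodate the $\bbZ_2 = N(\mathrm{GL}_k(\bbC))/\mathrm{GL}_k(\bbC)$ ambiguity coming from $h_{-1}$ (though in the $\alpha = 1$ case this is controlled, since $h_{-1}$ must act on $S$ compatibly with $\mathrm{GL}_k(\bbC)$, i.e.\ $\bbC$-linearly). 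A secondary subtlety is checking the precise match between the Lie-algebroid Nijenhuis torsion of $K$ and the ordinary Nijenhuis torsion of $\widetilde j$; this is a direct computation using the bracket dictionary of Section \ref{section:geometrylinebundle} and causes no conceptual trouble, but must be spelled out.
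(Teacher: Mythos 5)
Your treatment of the correspondence and of the implications $(1)\Rightarrow(2)$ and $(2)\Rightarrow(3)$ is exactly the paper's: complex frames of $(DL,K)$ lift to semi-local homogeneous frames with trivial $A_\sigma$, spanning $S$, and the vanishing of the Nijenhuis torsion of $\widetilde j$ is transported to that of $K$ through the degree-$0$ dictionary (the paper simply cites \cite[Example 2.3.4]{VitaglianoWade} for this identification rather than redoing the computation). The genuine divergence is $(3)\Rightarrow(1)$: the paper disposes of it in one line by invoking the proof of \cite[Theorem A.1.1]{Schnitzer}, whereas you give a self-contained argument modeled on the proof of Theorem \ref{theor:sympl_triv}: Newlander--Nirenberg gives holomorphic charts for $\widetilde j$, and since $h_r^\ast\widetilde j=\widetilde j$ implies $\mathcal L_{\calE}\widetilde j=0$, the nowhere-zero vector field $\tfrac12(\calE-i\,\widetilde j\calE)$ is holomorphic and the holomorphic flow-box theorem straightens it, yielding holomorphic coordinates with $\partial/\partial\chi^1=\calE$; integrating $[\calE,\partial/\partial\chi^i]=0$ then shows the chart is homogeneous with trivial $A$, exactly as the Carath\'eodory argument did in the symplectic case, and the $h_{-1}$ issue you flag is harmless because one only needs invariance near $r=1$ plus extension by the trivial lift, as in the paper's own construction of semi-local homogeneous frames. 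Your route buys independence from the external reference and makes the parallel with the cosymplectic case explicit, at the cost of having to spell out the real-holomorphicity of $\calE$ and the holomorphic flow-box step; the paper's route is shorter but leaves the equivariant normal form entirely to \cite{Schnitzer}. I see no gap in your outline, only the two verifications you already identified as needing to be written out.
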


\begin{proof}
Begin with a fiber-wise complex structure $K : DL \to DL$. Locally, around every point of $M$, we can choose a complex frame  of $DL$, with components 
$
\delta_1, \ldots, \delta_{k}, \zeta_1, \ldots, \zeta_{k},
$
 and  
$
\widetilde \delta_1, \ldots, \widetilde \delta_{k}, \widetilde \zeta_1, \ldots, \widetilde \zeta_{k}
$ 
are the components of a semi-local homogeneous frame $\sigma$ on $\Ltilde$ such that: $\sigma(r\epsilon) = (h_r)_\ast \circ \sigma (\epsilon)$. All such frames span an $\alpha$-homogeneous $\mathrm{GL}_{k}(\bbC)$-structure $S \subset \mathrm{Fr}(\Ltilde)$ with $\alpha = 1$, and this construction inverts the correspondence $S \mapsto K$.

For the second part of the statement, that \emph{(1)} implies \emph{(2)} is obvious. Let us show that \emph{(2)} implies \emph{(3)}. So, let $S$ be integrable. Then the associated almost complex structure $\widetilde j$ is actually a complex structure. As the Nijenhuis torsion of $\widetilde j$ vanishes iff so does the Nijenhuis torsion of $K$ (see \cite[Example 2.3.4]{VitaglianoWade}), we conclude that $K$ is a complex structure on the Atiyah algebroid $DL$. It remains to show that \emph{(3)} implies \emph{(1)}, but this is essentially contained in the proof of~\cite[Theorem A.1.1]{Schnitzer}.
\end{proof}

\begin{remark}
A fiber-wise complex structure $K$ on $DL$ is essentially the same as an \emph{almost contact structure} on $M$ in the sense of \cite{Blair} (see \cite[Appendix]{Schnitzer}), and $K$ is integrable iff the associated almost contact structure is \emph{normal} (see \cite{Blair}). Thus, (normal) almost contact structures in the sense of \cite{Blair} fit well in our setting.
\end{remark}

\subsection{The orthogonal group}\label{sec:orthogonal}

We conclude this paper with the Riemannian case of homogeneous $\mathrm O_{n+1}$-structures, where $\mathrm O_{n+1}\subset \mathrm{GL}_{n+1}(\bbR)$ is the orthogonal group. 

\begin{lemma}[{\cite[Theorems 1.10 and 2.9]{Sirola}}]
\label{lem:N(O)}
The normalizer $N(\mathrm{O}_{n+1})$ of the orthogonal group $\mathrm O_{n+1}$ in $\mathrm{GL}_{n+1}(\bbR)$ fits in the split short exact sequence of Lie group homomorphisms
\begin{equation}
	\label{eq:SES_ort}
	\begin{tikzcd}
	1 \arrow[r]&\mathrm{O}_{n+1}\arrow[r]&N(\mathrm{O}_{n+1})\arrow[r, 
	 "p"]&\bbR_{+}^\times \arrow[r]\arrow[l, bend right=-40, 
	 "\Sigma"]&1
	\end{tikzcd},
	\end{equation}
with $\bbR^\times_+$ the multiplicative group of positive reals, and $p : N(\mathrm{O}_{n+1}) \to \bbR^\times_+$ defined by $p(B)\bbI_{n+1} = B^t B$. Furthermore, a splitting $\Sigma$ is given by $\Sigma (r) = r^{1/2} \bbI_{n+1}$, and therefore $N(\mathrm{O}_{n+1})$ decomposes as the semidirect product of $\mathrm{O}_{n+1}$ and the $1$-dimensional subgroup consisting of positive scalar matrices.
\end{lemma}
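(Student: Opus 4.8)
The plan is to follow the same strategy as in the proof of Lemma~\ref{lem:N(GL(C))} above, using that $\mathrm{O}_{n+1}$ is the isometry group of the standard positive-definite symmetric bilinear form on $\bbR^{n+1}$, i.e.~$\mathrm{O}_{n+1} = \{A : A^tA = \bbI_{n+1}\}$.

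First I would note that positive scalar matrices $r\bbI_{n+1}$ are central in $\mathrm{GL}_{n+1}(\bbR)$ and hence normalize $\mathrm{O}_{n+1}$, so that $N(\mathrm{O}_{n+1})$ contains every matrix of the form $rO$ with $r \in \bbR^\times_+$ and $O \in \mathrm{O}_{n+1}$. For the reverse inclusion, given $B \in N(\mathrm{O}_{n+1})$, a short computation shows $B^{-1}\mathrm{O}_{n+1}B = \{A : A^t(B^tB)A = B^tB\}$, the isometry group of the symmetric bilinear form represented by $S := B^tB$; the normalizer condition then forces $A^tSA = S$ for every $A \in \mathrm{O}_{n+1}$, equivalently, since $A^t = A^{-1}$, that $S$ commutes with all of $\mathrm{O}_{n+1}$.

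The crux of the argument is to deduce from this that $S$ is a positive scalar matrix. Since $S = B^tB$ is symmetric and positive-definite, it is orthogonally diagonalizable; if it had two distinct eigenvalues $\lambda \neq \mu$ with unit eigenvectors $v, w$, then choosing $A \in \mathrm{O}_{n+1}$ with $Av = w$ (by transitivity of $\mathrm{O}_{n+1}$ on the unit sphere) and evaluating the identity $SA = AS$ at $v$ would give $\mu w = \lambda w$, a contradiction. Hence $S = c\bbI_{n+1}$ with $c > 0$, so that $c^{-1/2}B \in \mathrm{O}_{n+1}$, and therefore $N(\mathrm{O}_{n+1}) = \{rO : r \in \bbR^\times_+,\ O \in \mathrm{O}_{n+1}\}$. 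This Schur-type step is the only non-formal point; everything else is bookkeeping parallel to the symplectic and complex cases, and even this step is elementary here because $S$ is symmetric.

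Finally I would define $p : N(\mathrm{O}_{n+1}) \to \bbR^\times_+$ by $p(B)\bbI_{n+1} = B^tB$ (well-defined and $\bbR^\times_+$-valued by the above), verify it is a group homomorphism from the computation $(BB')^t(BB') = B'^t(B^tB)B'$, identify $\ker p = \{B : B^tB = \bbI_{n+1}\} = \mathrm{O}_{n+1}$, and check that $\Sigma(r) := r^{1/2}\bbI_{n+1}$ is a homomorphic section since $p(\Sigma(r)) = r$. The splitting then gives the semidirect product decomposition, with the $1$-dimensional complement of $\mathrm{O}_{n+1}$ being the group of positive scalar matrices — in fact a direct product, since those scalars are central.
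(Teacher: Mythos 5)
Your argument is correct. Note that for this lemma the paper offers no proof at all: it simply cites \cite[Theorems 1.10 and 2.9]{Sirola}, so the comparison is with that reference rather than with an in-text argument. What you do is give a self-contained elementary proof in the spirit of the paper's own proof of the complex case (Lemma \ref{lem:N(GL(C))}), but with a simplification made possible by positive-definiteness: instead of first computing the centralizer of the group and then analysing $J^{-1}B^{-1}JB$, you observe directly that $B^{-1}\mathrm{O}_{n+1}B$ is the isometry group of the form $S=B^tB$, so the normalizer condition forces $S$ to commute with all of $\mathrm{O}_{n+1}$, and your Schur-type eigenvalue argument (using transitivity of $\mathrm{O}_{n+1}$ on the unit sphere and the orthogonal diagonalizability of $S$) then yields $S=c\,\bbI_{n+1}$ with $c>0$, hence $B=c^{1/2}\,(c^{-1/2}B)$ with $c^{-1/2}B\in\mathrm{O}_{n+1}$. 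The verification that $p(B)\bbI_{n+1}=B^tB$ is a well-defined surjective homomorphism with kernel $\mathrm{O}_{n+1}$ and that $\Sigma(r)=r^{1/2}\bbI_{n+1}$ splits it is routine and correct, and your closing remark that the complement is central, so that $N(\mathrm{O}_{n+1})\cong \mathrm{O}_{n+1}\times\bbR^\times_+$ is actually a direct product, is a valid sharpening of the semidirect-product statement in the lemma. What your route buys is independence from Sirola's general normalizer machinery; what the citation buys is uniformity, since Sirola's results cover the symplectic and orthogonal cases (and more) at once.
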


The surjective homomorphism $p$ in Lemma \ref{lem:N(O)} induces an isomorphism $N(\mathrm{O}_{n+1})/\mathrm{O}_{n+1} \cong \bbR^\times_+$. In this final example, we consider $\alpha$-homogeneous $\mathrm O_{n+1}$-structures with $\alpha: \bbR^\times \to N(\mathrm{O}_{n+1})/\mathrm{O}_{n+1} \cong \bbR^\times_+$ the square root of the absolute value, i.e. $\alpha (r) = |r|^{1/2}$. The other cases are similar and are left to the reader.

Let $L \to M$ be a line bundle with $\dim M = n$, and let $S$ be an $\alpha$-homogeneous $\mathrm O_{n+1}$-structure with $\alpha$ as above. While in the case of usual $G$-structures, an $\mathrm O_{n}$-structure on a manifold is encoded by a metric on that manifold, an $\alpha$-homogeneous $\mathrm O_{n+1}$-structure on $\Ltilde$ gives rise to (and is encoded by, as we will prove) a triple $(\phi, g, \eta)$ consisting of an orientation preserving trivialization $\phi : |L| \cong \bbR_M$ of the line bundle $|L|$, a Riemannian metric $g$ on the base manifold $M$, and a $1$-form $\eta \in \Omega^1 (M)$. Let us explain how such a triple is constructed.

Around any point in  $\Ltilde$ there is a saturated open neighborhood $\Utilde$ and a homogeneous section $\sigma$ of $S|_{\Utilde}$ such that $A_\sigma (r) = |r|^{1/2} \bbI_{n+1}$ for all $r \in \bbR^\times$. The components $X_1, \ldots, X_{n+1}$ of $\sigma$ satisfy
\[
(h_r)_\ast (X_i) = |r|^{1/2} X_i \hspace{1cm} \forall \; r \in \bbR^\times,\;  i = 1,\ldots, n+1.
\]
Denote by $\xi^1, \ldots, \xi^{n+1}$ the components of the dual coframe, and define a Riemannian metric $\widetilde g$ by setting
\[
\widetilde g|_{\Utilde} = \xi^1 \odot \xi^1 + \cdots + \xi^{n+1} \odot \xi^{n+1}.
\]
This metric satisfies the homogeneity property
\[
h^\ast_r (\widetilde g) = |r| \widetilde g, \quad \forall\; r \in \bbR^\times,
\]
or, equivalently, $\widetilde g$ maps a pair of homogeneous vector fields of degree $0$, say $X,Y$, to a function $\widetilde g (X,Y)$ such that $h_r^\ast (\widetilde g (X,Y)) = |r| \widetilde g (X,Y)$. This, in turn, implies that $\widetilde g$ defines a definite, symmetric bilinear form (see Remark \ref{rem:phi-hom})
\[ 
G : DL \odot DL \to |L|.
\]
%

Now, since there is a canonical isomorphism of Lie algebroids $D|L| \cong DL$ (see again Remark \ref{rem:phi-hom}), $G$ is the same as a definite, symmetric $|L|$-valued bilinear form on $D|L|$, which we also denote by $G$. In particular, $ G(\bbI, \bbI)$ is a non-zero section of $|L|$ and it induces an orientation preserving trivialization 
\[
\phi : |L| \cong \bbR_M.
\] 
We can, therefore, identify $|L|$ with the trivial line bundle $\bbR_M$. Next, the $G$-orthogonal bundle $\bbI^\bot \subset D |L|$ is the image of a unique linear connection $\nabla^{|L|}: TM \to D|L|$ on $|L|$, and, since $|L|$ is a trivial line bundle, the connection $\nabla^{|L|}$ defines a connection $1$-form $\eta$ on $M$ via 
\[
\eta (X) = (\phi \circ \nabla^{|L|}_X \circ \phi^{-1}) (1),
\] 
for all $X \in \mathfrak X (M)$. Finally, we can also use $\nabla^{|L|}$ to identify $TM$ and $\bbI^\bot$, which allows us to regard the restriction of $G$ to $\bbI^\bot$ as the Riemannian metric $g$ on $M$ defined by
\[
g(X,Y) = (\phi \circ G)(\nabla^{|L|}_X, \nabla^{|L|}_Y).
\]

Moving on to the question of integrability, we know that since an $\alpha$-homogeneous $\mathrm O_{n+1}$-structure on $\Ltilde$ is, in particular, a Riemannian structure, then it is integrable if and only if the curvature of the induced metric $\widetilde{g}$ vanishes. In this setting, however, this can be stated more elegantly as the vanishing of the curvature of $G$, in the sense of Lie algebroids. Let us recall this notion of curvature.

The \emph{Fundamental Theorem of Riemannian Geometry} (for Lie algebroids) says that there exists a unique $D|L|$-connection $\nabla^D$ in $D|L|$ such that 
\begin{enumerate}
\item $\nabla^D_{\Delta_1} \Delta_2 - \nabla^D_{\Delta_2} \Delta_1 = [\Delta_1, \Delta_2]$ (i.e.~$\nabla^D$ is a \emph{symmetric connection}), 
\item $\Delta_1 (G (\Delta_2, \Delta_3)) = G (\nabla^D_{\Delta_1} \Delta_2, \Delta_3) + G (\Delta_2, \nabla^D_{\Delta_1} \Delta_3)$ (i.e.~$\nabla^D$ is a \emph{metric connection}), 
\end{enumerate}
for all $\Delta_1, \Delta_2, \Delta_3 \in \Gamma (D|L|)$. The curvature of $G$ is then defined as the $2$-form
\begin{equation}\label{eq:RD}
R^D : \wedge^2 D|L| \to \operatorname{End} D|L|, \quad (\Delta_1, \Delta_2) \mapsto [\nabla^D_{\Delta_1}, \nabla^D_{\Delta_2}] - \nabla^D_{[\Delta_1, \Delta_2]}.
\end{equation}

Since $G$ can be encoded in terms of the triple $(\phi, g, \eta)$, we should be able to express $R^D$ solely in terms of this data. Indeed, the curvature $R^D$ is determined by the following formulae (the computation is straightforward but lengthy, and we suffice with presenting here only the final result):
\[
\begin{aligned}
R^D (\mathbb I, \nabla^{|L|}_X) \mathbb I & = \tfrac{1}{4}\nabla^{|L|}_{A(X)}, \\
R^D (\mathbb I, \nabla^{|L|}_X) \nabla^{|L|}_Y & = \tfrac{1}{4} \left(\nabla^{|L|}_{B(X,Y)} -  g(A(X),Y) \mathbb I\right), \\
R^D (\nabla^{|L|}_X, \nabla^{|L|}_Y) \mathbb I & = -\tfrac{1}{4}  \nabla^{|L|}_{C(X,Y)}, \\
R^D (\nabla^{|L|}_X, \nabla^{|L|}_Y) \nabla^{|L|}_Z & = \tfrac{1}{4}\left(\nabla^{|L|}_{D(X,Y, Z)} - g\left(C(X,Y), Z\right) \mathbb I \right),
\end{aligned}
\] 
where $A,B, C$ and $D$ are the tensors defined by
\begin{equation}\label{eq:A,B}
\begin{aligned}
g(A(X), Y) & = i_Y\nabla_X \eta + i_X \nabla_Y \eta - \eta (X) \eta(Y)  + g( \eta^\sharp, \eta^\sharp)g (X,Y) - g ((i_X d\eta)^\sharp, (i_Y d\eta)^\sharp), \\
B(X,Y)_\flat & = 2 i_Y \nabla_X d\eta - \left( \eta (Y) + d\eta (\eta^\sharp, Y)\right) X_\flat + g(X,Y) \left( \eta + i_{\eta^\sharp} d\eta \right), \\
C(X,Y) & = B(X,Y) - B(Y,X) 
\end{aligned}
\end{equation}
and
\begin{equation}\label{eq:D}
\begin{aligned}
g(D(X,Y,Z),W) 
& = 2 g(R(X,Y,Z),W) + g (X,Z) g(Y,W) \\
& \quad + \left(i_Z\nabla_X \eta +  i_X\nabla_Z \eta - \eta(X)\eta(Z) + g(\eta^\sharp, \eta^\sharp)g(X,Z) \right) g(Y,W) \\
& \quad - \left(i_W \nabla_X \eta + i_X \nabla_W \eta - \eta(X) \eta(W)\right) g(Y,Z) \\
& \quad + d\eta (X,Y) d\eta(W,Z) + d\eta (X,Z) d\eta(Y,W) - (X \leftrightarrow Y).
\end{aligned}
\end{equation}
for all $X, Y, Z, W \in \mathfrak X (M)$. Here $\sharp : T^\ast M \to TM$ and $\flat = \sharp^{-1} : TM \to T^\ast M$ are the musical isomorphisms, $\nabla$ is the Levi-Civita connection, $R$ is the Riemann tensor of $g$, and $(X \leftrightarrow Y)$ denotes swapping $X$ and $Y$.

\begin{theorem}
\label{theor:riemannian}
Let $L \to M$ be a line bundle, with $n = \dim M$. The assignments $S \mapsto G \mapsto (\phi, g, \eta)$ establish one-to-one correspondences between
\begin{enumerate}
\item[(i)]$\alpha$-homogeneous $\mathrm O_{n+1}$-structures $S$ on $\Ltilde$, with $\alpha$ being the square root of the absolute value,
\item[(ii)]definite, symmetric, bilinear forms $G : DL \odot DL \to |L|$,
\item[(iii)] triples $(\phi, g, \eta)$ consisting of an orientation preserving trivialization $\phi : |L| \cong \bbR_M$ of the line bundle $|L|$, a Riemannian metric $g$ on $M$, and a $1$-form $\eta \in \Omega^1 (M)$.
\end{enumerate}
Furthermore, the following conditions are equivalent:
\begin{enumerate}
	\item $S$ is integrable,
	\item the curvature $R^D$ of $G$, defined in \eqref{eq:RD}, vanishes,
	\item the tensors $A, B$ and $D$, constructed from $g$ and $\eta$ as in (\ref{eq:A,B}) and (\ref{eq:D}), vanish.
\end{enumerate}
\end{theorem}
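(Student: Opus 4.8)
The proof splits into the two bijections and the integrability equivalence.

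\smallskip

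\emph{The bijection $S\leftrightarrow G$.} This is obtained exactly as in the proofs of Theorems~\ref{theor:sympl_triv} and \ref{theor:complex}. Well-definedness of $S\mapsto G$ is the content of the paragraphs preceding the statement. For the inverse, start from a definite symmetric $G\colon DL\odot DL\to|L|$, pick locally a positive basis of $\Gamma(|L|)$ to turn $G$ into an honest fibre metric on the rank-$(n+1)$ bundle $D|L|\cong DL$, choose a $G$-orthonormal local frame of $D|L|$, and rescale its homogenizations by a local nowhere-zero homogeneous function of the appropriate degree to obtain vector fields $X_1,\dots,X_{n+1}$ which are the components of a semi-local homogeneous frame $\sigma$ of $\Ltilde$ with $A_\sigma(r)=|r|^{1/2}\bbI_{n+1}$. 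Declaring all such frames (over all local choices) to be distinguished defines a $G$-structure $S\subset\mathrm{Fr}(\Ltilde)$, which one checks to be the unique $\alpha$-homogeneous $\mathrm O_{n+1}$-structure containing them; chasing $S\mapsto G$ back recovers the original $G$, so the two assignments are mutually inverse.

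\smallskip

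\emph{The bijection $G\leftrightarrow(\phi,g,\eta)$.} The map $G\mapsto(\phi,g,\eta)$ is the one described before the statement; the point is to check it is a bijection. The key observation is that a linear connection $\nabla^{|L|}$ on $|L|$ is the same datum as a splitting of $0\to\bbR_M\to D|L|\to TM\to0$, i.e.\ a decomposition $D|L|=\langle\bbI\rangle\oplus\im\nabla^{|L|}$, and that $|L|$ is always trivializable, so that fixing $\phi\colon|L|\cong\bbR_M$ the connection is encoded by the $1$-form $\eta(X)=(\phi\circ\nabla^{|L|}_X\circ\phi^{-1})(1)$. Conversely, from $(\phi,g,\eta)$ one reconstructs $G$ by imposing $\phi(G(\bbI,\bbI))=1$, $G(\bbI,\nabla^{|L|}_X)=0$ and $\phi(G(\nabla^{|L|}_X,\nabla^{|L|}_Y))=g(X,Y)$, which is manifestly definite, symmetric and bilinear; one then verifies that these two passages are inverse to one another. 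This step is a routine exercise in linear algebra over the splitting $D|L|=\langle\bbI\rangle\oplus\im\nabla^{|L|}$.

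\smallskip

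\emph{\textit{(1)}$\Leftrightarrow$\textit{(2)}.} By the classical theory of $\mathrm O$-structures, $S$ is integrable if and only if the Riemannian metric $\widetilde g$ on $\Ltilde$ is flat. Since multiplying a metric by a positive constant does not change its Levi-Civita connection, $\nabla^{\widetilde g}$ is invariant under every $h_r$ and hence restricts to an $\bbR$-bilinear operation on homogeneous-of-degree-$0$ vector fields, i.e.\ on $\Gamma(D|L|)$; this restriction is visibly torsion-free and metric with respect to $G$ (the corresponding properties of $\nabla^{\widetilde g}$ descend, using the correspondences of Section~\ref{section:geometrylinebundle} and Remark~\ref{rem:phi-hom}), so by the Fundamental Theorem of Riemannian Geometry for Lie algebroids it coincides with $\nabla^D$. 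Consequently the restriction of the Riemann tensor of $\widetilde g$ to $\Gamma(D|L|)$ is exactly $R^D$, and since the evaluation map $\Gamma(D|L|)\to T_\epsilon\Ltilde$, $\Delta\mapsto\widetilde\Delta_\epsilon$, is onto at every $\epsilon\in\Ltilde$, we conclude $R^{\widetilde g}=0\iff R^D=0$.

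\smallskip

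\emph{\textit{(2)}$\Leftrightarrow$\textit{(3)}.} Granting the four displayed formulae for $R^D$, this is pure bookkeeping. As $D|L|=\langle\bbI\rangle\oplus\im\nabla^{|L|}$, those four expressions determine $R^D$ completely, so $A=B=D=0$ forces $R^D=0$ (note $C(X,Y)=B(X,Y)-B(Y,X)$ then vanishes too). Conversely, if $R^D=0$, injectivity of $\nabla^{|L|}$ applied to $R^D(\bbI,\nabla^{|L|}_X)\bbI=\tfrac14\nabla^{|L|}_{A(X)}$ gives $A=0$; substituting this into $R^D(\bbI,\nabla^{|L|}_X)\nabla^{|L|}_Y=\tfrac14(\nabla^{|L|}_{B(X,Y)}-g(A(X),Y)\bbI)$ gives $B=0$, hence $C=0$; and then $R^D(\nabla^{|L|}_X,\nabla^{|L|}_Y)\nabla^{|L|}_Z=\tfrac14(\nabla^{|L|}_{D(X,Y,Z)}-g(C(X,Y),Z)\bbI)$ gives $D=0$. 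The genuine obstacle, which the statement merely records, is the derivation of those four curvature formulae: one must expand $\nabla^D$ via the Koszul formula in a frame adapted to $D|L|=\langle\bbI\rangle\oplus\im\nabla^{|L|}$, keep track of how $\eta$, $d\eta$, the Levi-Civita connection $\nabla$ of $g$ and its Riemann tensor $R$ enter, and differentiate once more to reach $R^D$; this is the long computation underlying \eqref{eq:A,B} and \eqref{eq:D}.
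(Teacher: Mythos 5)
Your proposal is correct and follows essentially the same route as the paper: the same construction of semi-local homogeneous frames via a positive basis $u$ of $\Gamma(|L|)$ and the rescaling $\widetilde u^{-1/2}$, the same reconstruction of $G$ from $(\phi,g,\eta)$ via the splitting $D|L|=\langle\bbI\rangle\oplus\operatorname{im}\nabla^{|L|}$, and the same reliance on the displayed curvature formulae (which the paper also states without derivation). In fact your argument for \emph{(1)}$\Leftrightarrow$\emph{(2)} — identifying the $h_r$-invariant Levi-Civita connection of $\widetilde g$ with $\nabla^D$ on degree-$0$ vector fields and using surjectivity of evaluation — supplies detail that the paper only asserts, so no gap relative to the paper's own proof.
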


\begin{proof}
Let $(\phi, g, \eta)$ be a triple as in the statement. We can use $\phi$ to identify $|L|$ and $\bbR_M$, and $\eta$ with a linear connection $\nabla^{|L|} : TM \to D|L|$. Since $\nabla^{|L|}$ is an isomorphism onto its image $H = \nabla^{|L|}(TM)$, $g$ defines a fiber-wise scalar product on $H$ and can be uniquely extended to a fiber-wise scalar product $G$ on $D|L|$ such that $G(\bbI, \bbI) = 1$ and $H = \bbI^\bot$. Identifying $|L|$ with the trivial line bundle again, and $D |L|$ with $DL$, we can regard $G$ as a definite, symmetric bilinear form
$
G : DL \odot DL \to |L|.
$
Additionally, $u = G(\mathbb I, \mathbb I)$ is a positively oriented basis of $\Gamma (|L|)$. Locally, around every point of $M$, we can choose an orthonormal frame with components
$
\delta_1, \ldots, \delta_{n+1}
$
for the fiber-wise scalar product $u^\ast \circ G : DL \odot DL \to \bbR_M$, where $u^\ast$ is the dual basis of $u$ in $\Gamma (|L|^{-1})$. It is easy to see that the vector fields
$
\widetilde u{}^{-1/2} \cdot \widetilde \delta_1, \ldots, \widetilde u{}^{-1/2} \cdot \widetilde \delta_{n+1}
$
are well-defined and that they are the components of a semi-local homogeneous frame $\sigma$ of $\Ltilde$ such that $\sigma (r \epsilon) = |r|^{-1/2}(h_r)_\ast \circ \sigma (\epsilon)$. All such frames span an $\alpha$-homogeneous $\mathrm{O}_{n+1}$-structure $S$ on $\Ltilde$ with $\alpha$ being the square-root of the absolute value. This construction inverts the assignment $S \mapsto (\phi, g, \eta)$.

For the second part of the statement, take an integrable $\alpha$-homogeneous $\mathrm{O}_{n+1}$-structure $S$ on $\Ltilde$ with $\alpha$ being the square-root of the absolute value. As above, $S$ determines a Riemannian metric $\widetilde g$ on $\Ltilde$. From integrability, $\widetilde g$ is a flat metric and we want to show how this translates in terms of the data $(\phi, g, \eta)$. To do this, we use $\phi$ to identify $|L|$ with the trivial line bundle. Now, using the fact that $\widetilde g$ determines a definite, symmetric bilinear form $G : D |L| \odot D|L| \to |L|$,
the flatness of $\widetilde g$ is equivalent to the vanishing of the curvature $R^D$ of $G$, which, in turn, is equivalent to the vanishing of the tensors $A, B$ and $D$. 
\end{proof}

\begin{remark}
While for usual $\mathrm O_n$-structures integrability implies (and is equivalent to) the vanishing of the curvature of the induced metric, Theorem \ref{theor:riemannian} shows that for homogeneous $\mathrm O_{n+1}$ structures on $\Ltilde$, with the natural choice of the square root function for $\alpha$, integrability implies a certain system of nonlinear PDEs for the induced metric and $1$-form. This is a curious phenomenon, which, if unknown in the Riemannian geometry literature (we we unable to find any mention of these PDEs), would be worth further investigation. 
\end{remark}

\begin{remark}
Unlike in the examples of the symplectic and complex groups, in the case of the orthogonal group we were unable to prove that integrability of the homogeneous $G$-structure implies homogeneous integrability due to the complications inherent to the equations $A = B = D = 0$ for $g$ and $\eta$. However, in the special case when $\eta = 0$, we obtain the following proposition. 
\end{remark}

\begin{proposition}
In the setting of Theorem \ref{theor:riemannian}, if the $\alpha$-homogeneous $\mathrm O_{n+1}$-structure $S$ is such that $\eta=0$, then $S$ is integrable if and only if it is homogeneous integrable. 
\end{proposition}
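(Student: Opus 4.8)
The plan is to establish only the nontrivial direction, that integrability implies homogeneous integrability, by arguing directly with the flat metric $\widetilde g$ on $\Ltilde$ rather than through the equations $A=B=D=0$; the converse is immediate, since a homogeneous coordinate chart is in particular a coordinate chart. So suppose $S$ is integrable, and recall (as for any $\mathrm O_{n+1}$-structure) that this means exactly that the metric $\widetilde g$ on $\Ltilde$ attached to $S$ as in Theorem~\ref{theor:riemannian} is flat, i.e.\ that around every point of $\Ltilde$ there are \emph{Euclidean coordinates} $\psi=(\psi^1,\dots,\psi^{n+1})$ with $\widetilde g=(d\psi^1)^2+\cdots+(d\psi^{n+1})^2$. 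The one structural input I would use is that the Euler vector field $\mathcal E\in\mathfrak X(\Ltilde)$ is a \emph{homothetic} vector field for $\widetilde g$: differentiating the homogeneity relation $h^\ast_{e^t}\widetilde g=e^t\widetilde g$ at $t=0$ gives $\mathcal L_{\mathcal E}\widetilde g=\widetilde g$.

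Now fix $\epsilon_0\in\Ltilde$ together with a connected Euclidean chart $\psi$ around it. Homothetic vector fields of Euclidean space are affine (the equation $\partial_i\mathcal E_j+\partial_j\mathcal E_i=\delta_{ij}$ forces $\mathcal E$ to be affine, with linear part the sum of a skew-symmetric matrix and the scaling $\tfrac12\bbI$), so in the chart $\psi$ one may write $\mathcal E=(\tfrac12\bbI+M_0)\psi+c_0$ with $M_0\in\mathfrak{so}(n+1)$ and $c_0\in\bbR^{n+1}$. Since the eigenvalues of $\tfrac12\bbI+M_0$ all have real part $\tfrac12$, this matrix is invertible, so after the affine (hence again Euclidean) change of coordinates $\chi:=\psi+(\tfrac12\bbI+M_0)^{-1}c_0$ we get $\mathcal E=(\tfrac12\bbI+M_0)\chi$, now a \emph{linear} vector field. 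Consequently, for $r$ near $1$ the diffeomorphism $h_r$ --- which is the time-$\log r$ flow of $\mathcal E$ --- is expressed in the chart $\chi$ by $\chi\mapsto\exp\!\big((\log r)(\tfrac12\bbI+M_0)\big)\chi=r^{1/2}\exp\!\big((\log r)M_0\big)\chi$; that is, $h_r^\ast\chi=A(r)\chi$ with $A(r):=r^{1/2}\exp\!\big((\log r)M_0\big)$ and no translational part. This one-parameter subgroup of $\mathrm{GL}_{n+1}(\bbR)$ extends to a Lie group homomorphism $\bbR^\times\to\mathrm{GL}_{n+1}(\bbR)$ by setting $A(-1):=-\bbI$, it takes values in $N(\mathrm O_{n+1})$, and, since $\exp((\log r)M_0)\in\mathrm O_{n+1}$, its composition with $N(\mathrm O_{n+1})\to N(\mathrm O_{n+1})/\mathrm O_{n+1}$ is the degree $\alpha$, the square root of the absolute value (here one uses Lemma~\ref{lem:N(O)} and Proposition~\ref{prop:sigma,sigma'}). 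By the characterisation of homogeneous coordinate charts in Section~\ref{section:homogeneousintegrability} (applied with $b\equiv0$), $(V,\chi)$ is a homogeneous chart; and $\sigma_\chi=(\partial_{\chi^1},\dots,\partial_{\chi^{n+1}})$ is $\widetilde g$-orthonormal, hence takes values in $S$. As $\epsilon_0$ was arbitrary, $S$ is homogeneous integrable.

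\textbf{Main obstacle.} The only point I expect to require genuine care is the last piece of bookkeeping: verifying that $A(r)=r^{1/2}\exp((\log r)M_0)$, which for $\eta\neq0$ really does carry the nontrivial orthogonal factor $\exp((\log r)M_0)$, is nevertheless a \emph{lift} of the degree $\alpha$, so that $\sigma_\chi$ lands in $S$ itself and not merely in a conjugate $\mathrm O_{n+1}$-structure. I should point out that this argument does not use the hypothesis $\eta=0$; the role of that hypothesis in the statement is that, by the formula for $D$ preceding the Proposition, integrability becomes equivalent to $g$ having constant sectional curvature $\tfrac14$, so that $\widetilde g$ is locally the flat metric cone over a round sphere, $\mathcal E$ becomes the rescaled radial field ($M_0=0$ above), and one may simply take $\chi=\psi$ with $h_r^\ast\chi=r^{1/2}\chi$. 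In particular the approach sidesteps the task flagged in the preceding Remark --- solving $A=B=D=0$ for $(g,\eta)$ in closed form --- which is the genuine obstruction only if one insists on the $(g,\eta)$-side route.
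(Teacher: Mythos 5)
Your proposal is correct, but it takes a genuinely different route from the paper's. The paper uses $\eta=0$ immediately: the conditions $A=B=0$ become vacuous, $D=0$ forces $(M,g)$ to have constant curvature $1/4$, and then $\widetilde g$ is written explicitly as the cone metric $dR\odot dR+R^{2}g(S^{n})$ with $R=2\widetilde u^{1/2}$, whose Cartesian coordinates are manifestly homogeneous, $h_r^\ast\chi^i=|r|^{1/2}\chi^i$. You instead argue entirely on $\Ltilde$: integrability means $\widetilde g$ is flat, the Euler field is a homothety, $\mathcal L_{\mathcal E}\widetilde g=\widetilde g$, and in Euclidean coordinates a homothetic field is affine with linear part $\tfrac12\bbI+M_0$, $M_0$ skew; after the translation making $\mathcal E$ linear, $h_r$ acts linearly in the chart for $r$ near $1$, so the chart is homogeneous by the characterization of Section~\ref{section:homogeneousintegrability} (applied with $b\equiv 0$ and $A$ extended to $\bbR^\times$ by $A(-1)=-\bbI$), while $\sigma_\chi$ is $\widetilde g$-orthonormal and hence automatically takes values in $S$, since $S$ is exactly the orthonormal frame bundle of $\widetilde g$ — so the ``lift of the degree'' verification you flag as the delicate point, although correct (by Lemma~\ref{lem:N(O)} the values $|r|^{1/2}\exp((\log|r|)M_0)$ lie in the coset of $|r|^{1/2}\bbI$), is not actually required by the definition of homogeneous integrability. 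The two points that genuinely need care are minor: the characterization must be invoked at every point of the chart domain, which works because the identity $\mathcal E=(\tfrac12\bbI+M_0)\chi$ holds on the whole connected chart, and one must shrink neighborhoods so the flow stays in the chart. What your route buys is significant: it never uses $\eta=0$, so it in fact establishes that integrability implies homogeneous integrability for all $\alpha$-homogeneous $\mathrm O_{n+1}$-structures with $\alpha=|\cdot|^{1/2}$, i.e.\ precisely the general statement that the Remark preceding the Proposition declares open; what the paper's proof buys instead is an explicit description, in terms of the data $(\phi,g,\eta)$ on $M$, of the integrable models with $\eta=0$ as flat metric cones over spheres of radius $2$.
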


\begin{proof}
Under the assumptions in the statement, the equations $A = B = 0$ are trivially fulfilled and the condition $D = 0$ boils down to $(M,g)$ being of constant curvature equal to $1/4$. Then, locally around every point, $(M,g)$ is isometric to the $n$-sphere of radius $2$. Locally, we can assume that $g = 4 g (S^n)$, where $g (S^n)$ is the metric of the unit sphere, and we can also express $g(S^n)$ in spherical coordinates $(z^1, \ldots, z^n)$.  Let $\widetilde g$ be as in the proof of Theorem \ref{theor:riemannian}, then:
\begin{enumerate}
\item $\widetilde g$ is equivalent to a definite, symmetric bilinear form $G : DL \odot DL \cong D|L| \odot D |L| \to |L|$,
\item we encode $G$ in the triple $(\phi, g, \eta)$, where $\phi:|L|\cong\bbR_M$ is the trivialization identifying $u = G(\bbI, \bbI)$ with the constant function $1$, and $\eta$ is the connection $1$-form of the unique connection $\nabla^{|L|}$ in $|L|$ whose image is the $G$-orthogonal complement $\bbI^\bot$,
\item if $\eta = 0$, then $\nabla^{|L|}$ is a flat connection and $u$ is a flat section,
\item being a positively oriented non-zero section of $|L|$, $u$ corresponds to a positive smooth function $\widetilde u \in C^\infty (\Ltilde)$ such that $h_r^\ast (\widetilde u) = |r| \widetilde u$, for all $r \in \bbR^\times$.
\end{enumerate}
From these facts, and the concrete relationship between $\widetilde g$ and $G$, it is easy to see that, when $\eta = 0$ and $g = 4g (S^n)$, we have, locally, that
\[
\widetilde g = \widetilde u ( \widetilde u^{-2} d \widetilde u \odot d \widetilde u + 4g (S^n)).
\]
Hence, setting $R = 2\widetilde{u}^{1/2}$, we find that
\[
\widetilde g = dR \odot dR + R^2 g(S^n),
\]
which is a flat metric. Passing from spherical coordinates $(R, z^1, \ldots, z^n)$ to Cartesian coordinates $(\chi^1, \ldots, \chi^{n+1})$, we can put $\widetilde g$ in the normal form
\[
\widetilde g = d\chi^1 \odot d\chi^1 + \cdots + d\chi^{n+1} \odot d\chi^{n+1}.
\]
The Cartesian coordinates are of the form $R \cdot Y^i (z^1, \ldots, z^n)$, for some smooth functions $Y^i$ of the variables $z^i$, and, hence, they are homogeneous in the sense that
\[
h_r^\ast (\chi^i) = h_r^\ast (R) \cdot Y^i (z^1, \ldots, z^n) = |r|^{1/2} \chi^i, \quad \forall \;r \in \bbR^\times, \; i =1, \ldots, n+1.
\]
We conclude that the $\mathrm{O}_{n+1}$-structure consisting of orthonormal frames of $\widetilde g$ is homogeneous integrable, as claimed.
\end{proof}

\end{document}